\newtheorem{lemma}{Lemma}[section]
\newtheorem{theorem}{Theorem}[section]
\newtheorem{proof}{Proof}
\newtheorem{example}{Example}[section]
\begin{document}

\title{Numerical analysis on boundary integral equation to exterior Dirichlet problem of Laplace equation}
\author{Yidong Luo}
\address{School of Mathematics and Statistics, Wuhan University, Hubei Province, P. R. China}
\ead{Sylois@whu.edu.cn}
\vspace{10pt}
\begin{indented}
\item[]August 2017
\end{indented}

\begin{abstract}
This paper investigate on numerical analysis on modified Single-layer approach to exterior Dirichlet problem of Laplace equation. We complete the convergence and error analysis of Petrov-Galerkin and Galerkin-Collocation methods with trigonometric basis for the induced modified Symm's integral equation of the first kind on analytic boundary. Besides, utilizing the composite trapezial quadrature formula and trigonometric interpolation to handle the singularity in modified logarithmic kernel, we establish the numerical procedure for implementation. On these numerical examples, we compare the effect and efficiency of different Petrov-Galerkin and Galerkin-Collocation methods.
\end{abstract}
%
% Uncomment for keywords
%\vspace{2pc}
%\noindent{\it Keywords}: XXXXXX, YYYYYYYY, ZZZZZZZZZ
%
% Uncomment for Submitted to journal title message
%\submitto{\JPA}
%
% Uncomment if a separate title page is required
%\maketitle
%
% For two-column output uncomment the next line and choose [10pt] rather than [12pt] in the \documentclass declaration
%\ioptwocol
%

\section{Introduction }
Integral equation method plays an important role in solving the (BVP) of Laplace equations.
Let $ \Omega \subseteq \mathrm{R}^2 $ be bounded and simply connected with boundary $ \partial \Omega $ of class $ C^2 $ and $ f \in C( \partial \Omega) $. To solve Dirichlet problem of Laplace equation
\begin{equation*}
\Delta u = 0 \ \textrm{in} \ \Omega, \ u = f \ \textrm{on} \ \partial \Omega,
\end{equation*}
When $ f \in C^{1,\alpha} (\partial \Omega) $, the solution $ u $ can be represented as single-layer potential
\begin{displaymath}
 u(x) := - \frac{1}{2\pi}\int_{\partial \Omega} \psi (y) \ln \vert x-y \vert ds(y), \quad x \in \mathrm{R}^2,
\end{displaymath}
provided that the density $ \psi \in C^{0,\alpha}(\partial \Omega) $ solves
\begin{equation}
S \psi:= - \frac{1}{2\pi} \int_{\partial \Omega} \psi (y) \ln \vert x-y \vert ds(y) = f(x), \quad x \in \partial \Omega,
\end{equation}
\indent (1.1) is known as Symm's integral equation of the first kind. There exists numerous work on numerical solution of (SIE). Frequently used method is Petrov-Galerkin and collocation methods, for example, Galerkin and collocation boundary element method, see [1,2,3,4,9,10]; Cheybshev polynomial-based collocation method see [17,19]; spline Galerkin and collocation method, see [5,7,8,20,22]; piecewise constant collocation and Galerkin methods, see [8,12,18,21]; wavelet-based or trigonometric-based Galerkin method, see [13, Chapter 3.3] and [11].
\newline \indent For the simplicity and completeness of analysis, we are interested in the numerical analysis of projection methods under Fourier basis for planar (SIE), such as Petrov-Galerkin methods and Galerkin-Collocation method. In past, assuming $ \partial \Omega $ to be analytic with nonzero pointwise tangent, that is, $ \partial \Omega $ possesses the analytic parameterizations
  \begin{equation}
\partial \Omega:= \{ \gamma(t): t \in [0, 2\pi) \}
\end{equation}
and $ \vert \gamma'(t) \vert > 0, \ t \in [0, 2\pi) $. Inserting (1.2) into (1.1), (SIE) is transformed into integral equation of $ 1 $ D:
\begin{equation}
 - \frac{1}{2 \pi} \int^{2\pi}_0 \Psi (s) \ln \vert \boldsymbol{\gamma}(t) - \boldsymbol{\gamma}(s) \vert ds = g(t), \quad x \in [0,2\pi],
\end{equation}
for the transformed density $ \Psi(s) := \psi(\boldsymbol{\gamma}(s))\vert \dot {\boldsymbol{\gamma}}(s)\vert $ and $ g(t) := f(\boldsymbol{\gamma}(t)), \ s \in [0,2\pi] $. Complete convergence and error are obtained (See [13] and references therein) for Petrov-Galerkin methods and Galerkin-Collocation method to $ g \in H^r(0,2\pi), r \geq 1 $ under $ L^2 $ setting.

\indent Similar to interior Dirichlet problem, to solve exterior problem
\begin{equation*}
\Delta u = 0 \quad \textrm{in} \ \mathrm{R}^2 \setminus \overline{\Omega}, \quad u = f \quad \textrm{on} \ \partial \Omega,
\end{equation*}
\begin{equation*}
\quad u(x) = u_{\infty} + o(\frac{1}{\vert x \vert }), \quad \textrm{for}  \  \vert x \vert \to \infty.
\end{equation*}
When $ f \in C^{1,\alpha} (\partial \Omega) $, with introduction of mean value operator $ M $ defined by
\begin{equation*}
M: \varphi \mapsto \frac{1}{\vert \partial \Omega \vert} \int_{\partial \Omega} \varphi ds,
\end{equation*}
the solution $ u $ can be represented as the modified single-layer potential
\begin{displaymath}
 u(x) := - \frac{1}{2\pi} \int_{\partial \Omega} (\varphi (y) - M \varphi) \ln \vert x-y\vert ds(y) + M \varphi, \quad x \in \mathrm{R}^2,
\end{displaymath}
provided that the density $ \varphi \in C^{0,\alpha} (\partial \Omega) $ solves the integral equation
\begin{equation}
S_0 \varphi:= - \frac{1}{2\pi} \int_{\partial \Omega} (\varphi (y) - M \varphi) \ln \vert x-y\vert ds(y) + M \varphi = f(x), \quad x \in \partial \Omega,
\end{equation}
Notice that $ S_0 $ is injective on $ C(\partial \Omega) $ even with no specific geometric condition on boundary $ \partial \Omega $, that is, any $ \varphi \in C(\partial \Omega) $ that solves $ S_0 \varphi = 0 $ can only be trivial (see [15, Theorem 7.41]).
Rewrite (1.4) as
\begin{equation}
S_0 \varphi = \int_{\partial \Omega} G(x,y) \varphi (y) ds(y)
\end{equation}
where
\begin{equation}
G(x,y) := - \frac{1}{2\pi} \ln \vert x-y \vert + \frac{1}{2\pi} \frac{1}{\vert \partial \Omega \vert} \int_{\partial \Omega} \ln \vert x- z \vert d s(z) + \frac{1}{\vert \partial \Omega \vert}.
\end{equation}

\indent In the following, utilizing the technique in (SIE), we transform the modified Symm's integral equation into one-dimensional form.
Now the regular parameterization
\begin{equation*}
\partial \Omega:= \{ \gamma(t): t \in [0, 2\pi) \}
\end{equation*}
is twice continuously differentiable with $ \vert \dot \gamma (t) \vert >0, \ \forall t \in [0,2\pi] $. Inserting it into (1.5), the modified (SIE) takes the form
\begin{equation}
S_0 \varphi := \int^{2\pi}_0 G(t,s) \Psi(s) ds = g(t), \quad t \in [0,2\pi)
\end{equation}
 with the transformed kernel
 \begin{equation*}
G(t,s) :=
\end{equation*}
 \begin{equation*}
 - \frac{1}{2\pi} \ln \vert \gamma(t) - \gamma (s) \vert + \frac{1}{2\pi} \frac{1}{ \vert \partial \Omega \vert} \int^{2\pi}_0 \ln \vert \gamma(t)- \gamma(\sigma) \vert \vert \gamma'(\sigma) \vert d \sigma + \frac{1}{\vert \partial \Omega \vert},
\end{equation*}
 the transformed density $ \Psi(s) := \varphi(\gamma(s))\vert {\gamma'}(s)\vert $ and $ g(t) := f(\gamma(t)), \ s \in [0,2\pi] $.
\newline \indent The research on numerical analysis of modified Symm's integral equation for exterior problem of Laplace equation is few. It is indicated in [15, Example 13.23] that, for ellipse boundary curve, the (1.7) can be rewriten as
  \begin{equation}
  \frac{1}{2\pi} \int^{2\pi}_0 E(t,\tau) \Psi(\tau) d\tau = g(t) ,\quad t \in [0,2\pi]
  \end{equation}
  where
  \begin{equation*}
  E(t,\tau) =  (\ln (4 \sin^2 \frac{t-s}{2}) -2 ) + (\ln (4 \sin^2 \frac{t-s}{2}) ) K(t,\tau) +L(t,\tau)
  \end{equation*}
  and $ K $, $ L $ is infinitely differentiable and $ 2\pi $ periodic with respect to both variables such that $ K(t,t) = 0 $ for all $ 0 \leq t \leq 2\pi $. Then, the (1.8) can be well handled by Galerkin-Collocation and Petrov-Galerkin methods on trigonometric basis.
  Besides, there exist some work using modified boundary integral equation with Nystr\"{o}m method to handle exterior Neumann and Robin  problem of Laplace equation (See [16,21]). In this paper, we use Lemma 2.4 to determine that all (1.7) on analytic boundary curves can be transformed into form (1.8), and thus, we can extend the convergence and error analysis of Petrov-Galerkin, Galerkin-Collocation methods with trigonometric basis to the more general case.

\indent As to the arrangement of the rest contents. In section 2, we introduce necessary preliminaries, such as periodic Sobolev space, basic properties of modified Symm's integral operator.  In section 3, we analyze the convergence for three Petrov-Galerkin methods and Galerkin-Collocation method respectively. In section 4, we illustrate the numerical procedures and complete the numerical experiments, show the validness of convergence analysis. In section 5, we conclude the whole work of this paper.
\section{Preliminaries}
\subsection{Periodic Sobolev space $ H^r(0,2\pi) $, trace space $ H^k (\Gamma)$ and estimates}
Throughout this paper, we denote the $ 2\pi $ periodic Sobolev space of order $ r \in \mathrm{R} $ by $ H^r(0,2\pi) $ (refer to [13,15]). Notice that, for $ r > s $, the Sobolev space $ H^r(0,2\pi) $ is a dense subspace of $ H^s(0,2\pi) $. The inclusion operator from $ H^r(0,2\pi) $ into $ H^s(0,2\pi) $ is compact.
\newline \indent Let $ \Gamma $ be the boundary of a simply connected bounded domain $ D \subseteq \mathrm{R}^2 $ of class $ C^k, k \in \mathrm{N} $. With the aid of a regular and $ k $ times continuously differentiable $ 2\pi $ periodic paramater representation
\begin{equation*}
\Gamma = \{ z(t): t \in [0,2\pi ) \}
\end{equation*}
for $ 0 \leq p \leq k $ we can define the trace space $ H^p(\Gamma) $ as the space of all functions $ \varphi \in L^2(\Gamma) $ with the property that $ \varphi \circ z \in H^p(0,2\pi) $. By $ \varphi \circ z $, we denote the $ 2\pi $ periodic function given by $ (\varphi \circ z)(t) := \varphi(z(t)), t\in \mathrm{R} $. The scalar product and norm on $ H^p(\Gamma) $ are defined through the scalar product on $ H^p(0,2\pi) $ by
\begin{equation*}
(\varphi, \psi)_{H^p(\Gamma)}:= (\varphi \circ z, \psi \circ z)_{H^p(0,2\pi)}.
\end{equation*}
\begin{lemma}
Let $ P_n : L^2 ( 0, 2\pi ) \longrightarrow X_n \subset L^2(0, 2\pi ) $ be an orthogonal projection operator, where $ X_n = span \{ e^{ikt} \}^n_{k = -n} $. Then \begin{math} P_n \end{math} is given as follows£º
\begin{equation*}
(P_n x)(t) = \sum^n_{k = -n} a_k e^{ikt}, \quad x \in L^2(0,2\pi),
\end{equation*}
where
\begin{equation*}
a_k = \frac{1}{2\pi}\int^{2\pi}_0 x(s) \exp(-iks) ds, \quad k \in \mathrm{N},
  \end{equation*}
 are the Fourier coefficients of $ x $. Furthermore, the following estimate holds:
 \begin{equation*}
 \Vert x- P_n x \Vert_{H^s} \leq \frac{1}{ n^{r-s} } \Vert x \Vert_{H^r} \quad x \in  \ H^r(0,2\pi),
 \end{equation*}
 where $ r \geq s $.
\end{lemma}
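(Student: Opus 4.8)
The plan is to establish the two assertions separately, both by passing to the Fourier side. First I recall the standard characterization of the periodic Sobolev norm (as in [13,15]): writing $x \in H^r(0,2\pi)$ through its Fourier series $x = \sum_{k \in \mathrm{Z}} a_k e^{ikt}$ with $a_k = \frac{1}{2\pi}\int_0^{2\pi} x(s)e^{-iks}ds$, one has $\Vert x \Vert_{H^r}^2 = \sum_{k}(1+k^2)^r \vert a_k\vert^2$ up to the fixed normalization constant coming from $\Vert e^{ikt}\Vert_{L^2}^2 = 2\pi$. Since this constant scales both sides of the desired estimate identically, I suppress it and treat $\{e^{ikt}\}_{k\in\mathrm{Z}}$ as an orthogonal basis of $L^2(0,2\pi)$.

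For the projection formula, I would verify the two defining properties of the orthogonal projection onto $X_n$. The candidate $Q_n x := \sum_{k=-n}^n a_k e^{ikt}$ clearly lies in $X_n$, so it remains to check $x - Q_n x \perp X_n$. Using the orthogonality $(e^{ikt}, e^{ijt})_{L^2} = 2\pi\,\delta_{kj}$, for every $\vert j\vert \leq n$ one gets $(x, e^{ijt}) = 2\pi a_j = (Q_n x, e^{ijt})$, hence $(x - Q_n x, e^{ijt}) = 0$. Because the pair of properties $Q_n x \in X_n$ and $x - Q_n x \perp X_n$ characterizes the orthogonal projection uniquely, $Q_n = P_n$, which is exactly the claimed formula with coefficients $a_k$.

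For the error estimate, the key observation is that $x - P_n x$ is precisely the tail $\sum_{\vert k\vert > n} a_k e^{ikt}$, so the Fourier characterization gives $\Vert x - P_n x\Vert_{H^s}^2 = \sum_{\vert k\vert > n}(1+k^2)^s \vert a_k\vert^2$. I then factor each summand as $(1+k^2)^{s-r}(1+k^2)^r\vert a_k\vert^2$ and bound the spectral weight uniformly over the tail. This is the one genuine step: since $r \geq s$ the exponent $s-r$ is nonpositive, so $k \mapsto (1+k^2)^{s-r}$ is nonincreasing in $\vert k\vert$, and for $\vert k\vert > n$ we have $1+k^2 > n^2$, whence $(1+k^2)^{s-r} \leq (n^2)^{s-r} = n^{-2(r-s)}$. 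Pulling this uniform bound out of the sum and estimating the remaining factor by the full series yields $\Vert x - P_n x\Vert_{H^s}^2 \leq n^{-2(r-s)}\sum_{\vert k\vert > n}(1+k^2)^r\vert a_k\vert^2 \leq n^{-2(r-s)}\Vert x\Vert_{H^r}^2$, and taking square roots gives the claim.

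I expect no serious obstacle; the only delicate points are bookkeeping ones. One must fix a single normalization for the Fourier coefficients and the $H^r$ norm and keep it consistent, since the clean constant $1/n^{r-s}$ reflects the weight $(1+k^2)^r$; any equivalent choice works because the tail bound uses only monotonicity of the weight. The substantive point is that the inequality $1+k^2 > n^2$ for $\vert k\vert > n$ must be applied in the direction dictated by the sign of $s-r$, which is exactly where the hypothesis $r\geq s$ is used.
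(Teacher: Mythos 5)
Your argument is correct and complete: the verification of the two defining properties of the orthogonal projection, followed by the tail estimate $\Vert x-P_nx\Vert_{H^s}^2=\sum_{|k|>n}(1+k^2)^{s-r}(1+k^2)^r|a_k|^2\leq n^{-2(r-s)}\Vert x\Vert_{H^r}^2$ using $1+k^2>n^2$ and $s-r\leq 0$, is exactly the standard proof. The paper itself gives no argument here, only the citation to [13, Theorem A.43], and your proof is precisely the one found there, so there is nothing to reconcile.
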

\begin{proof}
See [13, Theorem A.43].
\end{proof}
\begin{lemma}
(Inverse inequality):
Let $ r \geq s $. Then there exists a $ c > 0 $ such that
\begin{equation*}
\Vert \psi_n \Vert_{H^r} \leq c n^{r-s} \Vert \psi_n \Vert_{H^s}, \quad \forall \ \psi_n \in X_n
\end{equation*}
for all $ n \in \mathrm{N} $.
\end{lemma}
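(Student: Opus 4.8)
The plan is to work directly with the Fourier-series characterization of the periodic Sobolev norm, which is the natural tool since $X_n$ is defined spectrally. Recall that for $\psi = \sum_{k} a_k e^{ikt}$ the norm on $H^r(0,2\pi)$ is $\Vert \psi \Vert_{H^r}^2 = \sum_{k} (1+k^2)^r |a_k|^2$, the same weight convention that makes the constant in Lemma 2.1 equal to one. Since every $\psi_n \in X_n$ is a trigonometric polynomial whose Fourier coefficients $a_k$ vanish for $|k| > n$, both the $H^r$ and the $H^s$ norm of $\psi_n$ reduce to finite sums over the band $|k| \leq n$.

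First I would compare the two weights termwise over this band. For each index with $|k| \leq n$, factor $(1+k^2)^r = (1+k^2)^{r-s}\,(1+k^2)^s$. Because $r-s \geq 0$, the map $k \mapsto (1+k^2)^{r-s}$ is nondecreasing in $|k|$ and is therefore maximized over the admissible range at $|k| = n$, giving $(1+k^2)^{r-s} \leq (1+n^2)^{r-s}$. Substituting this uniform bound into the finite sum yields $\Vert \psi_n \Vert_{H^r}^2 \leq (1+n^2)^{r-s}\,\Vert \psi_n \Vert_{H^s}^2$.

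It then remains to convert the natural factor $(1+n^2)^{(r-s)/2}$ into the stated form $c\,n^{r-s}$. For $n \geq 1$ we have $1+n^2 \leq 2n^2$, whence $(1+n^2)^{(r-s)/2} \leq 2^{(r-s)/2}\,n^{r-s}$; taking square roots of the displayed inequality gives the claim with $c = 2^{(r-s)/2}$, a constant independent of both $n$ and $\psi_n$.

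There is essentially no genuine obstacle here: the result follows immediately from the fact that the spectrum of any element of $X_n$ is confined to $|k| \leq n$, which bounds the ratio of the two Sobolev weights uniformly by a power of $n$. The only point needing minor care is the elementary estimate $1+n^2 \leq 2n^2$ used to pass from $(1+n^2)^{(r-s)/2}$ to the cleaner power $n^{r-s}$; this is also why one restricts to $n \geq 1$, since for $n=0$ and $r>s$ the space of constants $X_0$ would otherwise render the statement vacuous.
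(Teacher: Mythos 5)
Your proof is correct and complete: confining the spectrum to $|k|\leq n$ bounds the ratio of the weights $(1+k^2)^r/(1+k^2)^s$ by $(1+n^2)^{r-s}\leq 2^{r-s}n^{2(r-s)}$, which is exactly the standard argument. The paper itself offers no proof of this lemma, deferring entirely to [13, Theorem 3.19]; your spectral argument is precisely the one behind that citation, so there is nothing further to reconcile.
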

\begin{proof}
See [13, Theorem 3.19].
\end{proof}
\subsection{Integral operator and regularity}
\begin{lemma}
Let $ r \in \mathrm{N} $ and $ k \in C^r([0,2\pi] \times [0,2\pi]) $ be $ 2\pi- $ periodic with respect to both variables. Then the integral operator $ K $, defined by
\begin{equation*}
(Kx)(t) := \int^{2\pi}_0 k(t,s) x(s) ds, \quad t \in (0,2\pi),
\end{equation*}
can be extended to a bounded operator from $ H^p(0,2\pi) $ into $ H^r(0,2\pi) $ for every $ -r \leq p \leq r $.
\end{lemma}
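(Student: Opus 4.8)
The plan is to pass to the Fourier picture of the periodic Sobolev spaces. Recall that every $\phi\in H^q(0,2\pi)$ is characterized by its Fourier coefficients $\hat\phi(m)=\frac{1}{2\pi}\int_0^{2\pi}\phi(s)e^{-ims}\,ds$ through $\Vert\phi\Vert_{H^q}^2=\sum_{m\in\mathbb{Z}}(1+m^2)^q|\hat\phi(m)|^2$, valid for every real $q$ (for $q<0$ the space is the completion of the trigonometric polynomials under this norm). Since $K$ is defined a priori by the integral on continuous functions and the trigonometric polynomials are dense in each $H^p(0,2\pi)$, it suffices to establish the bound $\Vert Kx\Vert_{H^r}\le C\Vert x\Vert_{H^p}$ for trigonometric polynomials $x$; the operator then extends by continuity to all of $H^p(0,2\pi)$. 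First I would expand the kernel in its double Fourier series $k(t,s)=\sum_{m,l}\hat k(m,l)e^{imt}e^{ils}$ and interchange summation and integration to read off the Fourier coefficients of $Kx$. Using $\int_0^{2\pi}e^{ils}x(s)\,ds=2\pi\,\hat x(-l)$ this gives
\begin{equation*}
\widehat{(Kx)}(m)=2\pi\sum_{l}\hat k(m,l)\,\hat x(-l)=2\pi\sum_{l}\hat k(m,-l)\,\hat x(l).
\end{equation*}

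Next I would control this coefficientwise by Cauchy--Schwarz, splitting the weight as $1=(1+l^2)^{-p/2}(1+l^2)^{p/2}$:
\begin{equation*}
|\widehat{(Kx)}(m)|^2\le 4\pi^2\Big(\sum_{l}(1+l^2)^{-p}|\hat k(m,-l)|^2\Big)\Big(\sum_{l}(1+l^2)^{p}|\hat x(l)|^2\Big).
\end{equation*}
Multiplying by $(1+m^2)^r$ and summing over $m$ yields
\begin{equation*}
\Vert Kx\Vert_{H^r}^2\le 4\pi^2\,C(r,p)\,\Vert x\Vert_{H^p}^2,\qquad C(r,p):=\sum_{m,l}(1+m^2)^r(1+l^2)^{-p}|\hat k(m,l)|^2,
\end{equation*}
so the entire statement reduces to showing $C(r,p)<\infty$ for every $-r\le p\le r$. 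Because $-r\le p\le r$ forces $-p\le r$ and hence $(1+l^2)^{-p}\le(1+l^2)^{r}$, it is enough to bound the single worst case (attained at $p=-r$), namely $C(r):=\sum_{m,l}(1+m^2)^r(1+l^2)^{r}|\hat k(m,l)|^2$.

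The main obstacle is precisely this finiteness, and it is here that the smoothness of $k$ in \emph{both} variables enters. Since $r$ is a nonnegative integer I would expand $(1+m^2)^r(1+l^2)^r=\sum_{a,b\le r}\binom{r}{a}\binom{r}{b}m^{2a}l^{2b}$ and observe, via Parseval, that the Fourier coefficients of the mixed derivative $\partial_t^a\partial_s^b k$ are $(im)^a(il)^b\hat k(m,l)$, so that $\sum_{m,l}m^{2a}l^{2b}|\hat k(m,l)|^2$ is a constant multiple of $\Vert\partial_t^a\partial_s^b k\Vert_{L^2([0,2\pi]^2)}^2$. For each pair $a,b\le r$ the hypothesis guarantees that $\partial_t^a\partial_s^b k$ is continuous and $2\pi$-periodic on the compact square, hence square-integrable; summing the finitely many finite contributions gives $C(r)<\infty$. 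Consequently $\Vert Kx\Vert_{H^r}\le 2\pi\sqrt{C(r,p)}\,\Vert x\Vert_{H^p}$ on trigonometric polynomials, and the bounded extension to $H^p(0,2\pi)$ follows by density. I would emphasize in the write-up that the delicate point is the joint regularity: only the $t$-smoothness is needed to map $L^2$ into $H^r$ (the case $p\ge0$), whereas the negative range $p<0$ requires the additional $s$-derivatives of $k$ to pair against distributions of order up to $r$, which is exactly what the assumed differentiability with respect to both variables supplies. (An equivalent route, which I would mention as an alternative, is to write $\partial_t^j(Kx)(t)=\langle\partial_t^j k(t,\cdot),x\rangle$ and estimate this duality pairing by $\Vert\partial_t^j k(t,\cdot)\Vert_{H^{-p}}\Vert x\Vert_{H^p}$, leading to the same mixed-smoothness requirement.)
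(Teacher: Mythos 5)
The paper gives no argument for this lemma at all---it simply cites [13, Theorem A.45]---so your Fourier-coefficient proof is the natural one to supply, and its skeleton is correct and standard: the identity $\widehat{(Kx)}(m)=2\pi\sum_l\hat k(m,-l)\hat x(l)$ (which for trigonometric polynomials $x$ follows from Fubini alone, with no need to invoke pointwise convergence of the double Fourier series of $k$), the Cauchy--Schwarz step with the split weight, and the reduction to the finiteness of $C(r,p)=\sum_{m,l}(1+m^2)^r(1+l^2)^{-p}|\hat k(m,l)|^2$. The genuine gap is in the very last step, where you assert that ``for each pair $a,b\le r$ the hypothesis guarantees that $\partial_t^a\partial_s^b k$ is continuous.'' Under the usual meaning of $k\in C^r([0,2\pi]^2)$---all partial derivatives of \emph{total} order at most $r$ are continuous---you only control $\partial_t^a\partial_s^b k$ for $a+b\le r$, whereas your constant $C(r)$ needs the mixed derivatives up to $\partial_t^r\partial_s^r k$, of total order $2r$. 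This is not a removable technicality: for $p<0$ the conclusion itself can fail under the total-order reading. Take $r=1$ and the convolution kernel $k(t,s)=f(t-s)$ with $f(u)=\sum_{j\ge 1}2^{-5j/4}e^{i2^ju}$; the termwise differentiated series converges uniformly, so $f\in C^1$ and hence $k\in C^1$ in the total-order sense, yet for $x_j(t)=e^{i2^jt}$ one has $Kx_j=2\pi\hat f(2^j)x_j$ with $|\hat f(2^j)|=2^{-5j/4}$, so that $\Vert Kx_j\Vert_{H^1}/\Vert x_j\Vert_{H^{-1}}=2\pi|\hat f(2^j)|(1+4^j)\sim 2\pi\cdot 2^{3j/4}\to\infty$, and $K$ is unbounded from $H^{-1}(0,2\pi)$ into $H^1(0,2\pi)$.

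So your argument is complete exactly when the hypothesis is read as rectangular smoothness, i.e.\ $\partial_t^a\partial_s^b k$ continuous for all $0\le a,b\le r$, which is what the cited source implicitly uses as well; for $0\le p\le r$ only the $t$-derivatives enter, $(1+l^2)^{-p}\le 1$, and $C^r$ in the total-order sense does suffice, as your own remark about the two regimes correctly anticipates. In the present paper the lemma is only ever applied to the kernels $G_2+G_3$, which are $C^\infty$ (indeed analytic), so the distinction is harmless here; but in a self-contained write-up you should either state the stronger mixed-smoothness hypothesis explicitly or restrict the conclusion to $0\le p\le r$.
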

\begin{proof}
See [13, Theorem A.45].
\end{proof}
\begin{lemma}
Let $ \partial \Omega $ be the boundary of bounded simply connected domain $ \Omega \subseteq \mathrm{R}^2 $. If $ \partial \Omega $ is of class $ C^{m+1, \alpha} $ and $ \varphi $ of $ C^{m, \alpha} $ with $ m \in \mathrm{N} $ and $ 0 < \alpha < 1 $, then the interior single layer potential defined by $ \varphi $, that is,
\begin{equation*}
\Psi (x) := \int_{\partial \Omega} \varphi(y) \ln \vert x - y \vert d s(y), \quad x \in \overline{\Omega},
\end{equation*}
 is of class $ C^{m+1, \alpha} $ on $ \overline{\Omega} $.
\end{lemma}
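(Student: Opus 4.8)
The plan is to exploit the two-dimensional complex structure and reduce everything to the boundary regularity of Cauchy-type integrals. Identify $\mathrm{R}^2$ with $\mathrm{C}$, writing $z = x_1 + ix_2$ and $\zeta = y_1 + iy_2$. Since $\Psi$ is harmonic in $\Omega$, the mixed Wirtinger derivative $\partial_z\partial_{\bar z}\Psi = \frac14\Delta\Psi$ vanishes in $\Omega$, so every real partial derivative of order at most $m+1$ is, in the interior, a linear combination of the pure holomorphic derivatives $\partial_z^k\Psi$ and their conjugates $\overline{\partial_z^k\Psi}=\partial_{\bar z}^k\Psi$. Hence it suffices to show that each $\partial_z^k\Psi$, $0\le k\le m+1$, extends to a function of class $C^{0,\alpha}(\overline\Omega)$. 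The starting point is the elementary identity $(\partial_{x_1}-i\partial_{x_2})\ln|x-y| = 1/(z-\zeta)$, which yields
\begin{equation*}
\partial_z \Psi(x) = \frac{1}{2}\int_{\partial\Omega}\frac{\varphi(y)}{z-\zeta}\,ds(y),
\end{equation*}
a Cauchy-type integral along $\partial\Omega$.

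Next I would convert higher derivatives into Cauchy integrals of lower-order data by integration by parts. Parametrizing $\zeta=\gamma(\sigma)$, $ds(y)=|\gamma'(\sigma)|\,d\sigma$, a Cauchy integral takes the form $I(z)=\int_0^{2\pi} h(\sigma)(\gamma(\sigma)-z)^{-1}\,d\sigma$; differentiating in $z$ and using $(\gamma-z)^{-2}=-\tfrac{1}{\gamma'}\tfrac{d}{d\sigma}(\gamma-z)^{-1}$ together with the periodicity of $\gamma$ gives $\partial_z I(z)=\int_0^{2\pi}(\gamma(\sigma)-z)^{-1}\tfrac{d}{d\sigma}\!\big(h/\gamma'\big)\,d\sigma$. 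The crucial point of this integration by parts is that it preserves the single-pole kernel $(\gamma-z)^{-1}$ instead of producing the more singular kernels $(\gamma-z)^{-k}$ obtained by naive differentiation; it merely replaces $h$ by $\tfrac{d}{d\sigma}(h/\gamma')$. Iterating $k-1$ times starting from $h_0 = -\tfrac12\varphi(\gamma)|\gamma'|$ shows that $\partial_z^k\Psi$ is a Cauchy integral whose density is built from derivatives up to order $k-1$ of $\varphi\circ\gamma$ and of the unit tangent $T=\gamma'/|\gamma'|$, divided by powers of $\gamma'$. Under the hypotheses $\varphi\in C^{m,\alpha}$ and $\gamma\in C^{m+1,\alpha}$ this density belongs to $C^{0,\alpha}(\partial\Omega)$ for every $k\le m+1$; this is exactly where the extra derivative of the geometry is consumed.

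Finally I would invoke the Plemelj--Privalov theorem: the Cauchy integral of a $C^{0,\alpha}$ density over a $C^{1,\alpha}$ (Lyapunov) closed contour extends from $\Omega$ to a function in $C^{0,\alpha}(\overline\Omega)$. Applying this to each $\partial_z^k\Psi$, $0\le k\le m+1$, and recombining with the harmonicity observation of the first paragraph (real derivatives of order $n$ agree in $\Omega$ with combinations of $\partial_z^n\Psi$ and $\partial_{\bar z}^n\Psi$, which now extend continuously to $\overline\Omega$) yields $\Psi\in C^{m+1,\alpha}(\overline\Omega)$.

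The main obstacle is the Plemelj--Privalov boundary estimate itself: proving that the Cauchy integral is uniformly Hölder continuous up to the singular contour, with a $C^{0,\alpha}$ boundary trace, is the genuine analytic input, whereas the interior integration by parts is elementary since for $z\notin\partial\Omega$ the kernel is smooth in $\zeta$. A secondary point requiring care is the smoothness bookkeeping in the iterated integration by parts — converting arc-length into parameter derivatives and dividing by $\gamma'$ — to confirm that precisely the stated regularity $\partial\Omega\in C^{m+1,\alpha}$, $\varphi\in C^{m,\alpha}$ suffices and no more. As an alternative one could run a real-variable induction on $m$ based on the classical jump relations and the mapping properties of the single- and double-layer operators in Hölder spaces; this route avoids Privalov but must handle the single and double layers simultaneously and is appreciably longer in the plane.
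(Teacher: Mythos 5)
Your argument is correct. Note first that the paper does not actually prove this lemma: its ``proof'' is the bare citation [6, Page 303] (Dautray--Lions), where the H\"{o}lder regularity of the single-layer potential up to the boundary is obtained by direct potential-theoretic estimates on the kernel and its tangential and normal derivatives, an argument valid in any space dimension. Your route is genuinely different and specifically two-dimensional: you use harmonicity of $\Psi$ in $\Omega$ to reduce every real partial derivative of order at most $m+1$ to the holomorphic derivatives $\partial_z^k\Psi$ and their conjugates, convert $\partial_z\Psi$ into a Cauchy-type integral via $(\partial_{x_1}-i\partial_{x_2})\ln|x-y|=(z-\zeta)^{-1}$, trade each further $z$-derivative for a $\sigma$-derivative of the density through the periodic integration by parts based on $(\gamma-z)^{-2}=-(\gamma')^{-1}\tfrac{d}{d\sigma}(\gamma-z)^{-1}$, and conclude with the Plemelj--Privalov theorem. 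The bookkeeping checks out: the density of $\partial_z^{m+1}\Psi$ consumes exactly $m$ derivatives of $\varphi\circ\gamma$ and $m+1$ derivatives of $\gamma$ (divided by powers of the nonvanishing $\gamma'$), which is precisely the stated hypothesis; and passing from ``all $\partial_z^k\Psi$, $k\le m+1$, extend H\"{o}lder-continuously to $\overline\Omega$'' to ``$\Psi\in C^{m+1,\alpha}(\overline\Omega)$'' is legitimate because $\Omega$ has $C^1$ boundary and is therefore quasiconvex, so continuous extensions of the derivatives integrate back to a $C^{m+1}$ extension of $\Psi$. What your approach buys is a short, self-contained planar proof whose single nontrivial analytic input is the classical H\"{o}lder boundary behaviour of Cauchy integrals over Lyapunov contours; what it gives up is dimensional generality, and it concentrates all the hard analysis in Plemelj--Privalov in much the same way that the paper concentrates the entire lemma in the external reference [6].
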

\begin{proof}
See [6, Page 303]
\end{proof}
\subsection{Modified Symm's integral equation of the first kind}
Throughout this paper, we denote the modified Symm's integral operator in (1.7) by $ S_0 $.
\begin{equation}
(S_0 \Psi)(t) := \int^{2\pi}_0 G(t,s) \Psi(s) ds = g(t), t \in [0,2\pi)
\end{equation}
 with the transformed kernel
 \begin{equation*}
G(t,s) :=
\end{equation*}
 \begin{equation*}
 - \frac{1}{2\pi} \ln \vert \gamma(t) - \gamma (s) \vert + \frac{1}{2\pi} \frac{1}{ \vert \partial \Omega \vert} \int^{2\pi}_0 \ln \vert \gamma(t)- \gamma(\sigma) \vert \vert \gamma'(\sigma) \vert d \sigma + \frac{1}{\vert \partial \Omega \vert}.
\end{equation*}
Utilizing the common decomposition technique on kernel (see [13, Chapter 3.3]) in Symm's integral equation of the first kind, we split kernel $ G(t,s) $ into three parts:
\begin{equation}
G(t,s) = G_1(t,s) + G_2(t,s) + G_3(t),
\end{equation}
where
\begin{equation}
G_1(t,s) := - \frac{1}{4\pi} (\ln (4 \sin^2 \frac{t-s}{2}) -1 ) \quad (t \neq s)
\end{equation}
\begin{equation}
G_2(t,s) :=  - \frac{1}{2\pi} \ln \vert \gamma(t) - \gamma (s) \vert + \frac{1}{4\pi} (\ln (4 \sin^2 \frac{t-s}{2}) -1 ) \quad (t \neq s)
\end{equation}
\begin{equation}
G_3(t) := \frac{1}{2\pi} \frac{1}{ \vert \partial \Omega \vert} \int^{2\pi}_0 \ln \vert \gamma(t)- \gamma(\sigma) \vert \vert \gamma'(\sigma) \vert d \sigma + \frac{1}{\vert \partial \Omega \vert}.
\end{equation}
 We note that the logarithmic singularities at $ t = s $ in $ G(t,s) $ is separated to $ G_1 $, and $ G_1 $ corresponds to the regular representation of disc with center $ 0 $ and radius $ a = e^{-\frac{1}{2}} $, that is,
 \begin{equation*}
  \gamma_a (s)= e^{-\frac{1}{2}} ( \cos s, \sin s ), \ s \in [0,2\pi).
  \end{equation*}
  The second part $ G_2 $  has a analytic continuation onto $ [0,2\pi] \times [0,2\pi] $ (See [13, Page 84]) since $ \gamma $ is analytic. The third part
  \begin{equation*}
   G_3(t) = - \frac{1}{\vert \partial \Omega \vert} h(t) + \frac{1}{\vert \partial \Omega \vert}, \quad t \in [0,2\pi],
   \end{equation*}
   where
   \begin{equation*}
h(t) = - \frac{1}{2\pi}  \int^{2\pi}_0 \ln \vert \gamma(t)- \gamma(\sigma) \vert \vert \gamma'(\sigma) \vert d \sigma
\end{equation*}
is the single layer potential of constant function $ 1 $ on $ \partial \Omega $ which is analytic. By Lemma 2.4, $ G_3(t) \in C^{\infty}[0,2\pi] $
\newline \indent Now we define integral operators respectively as
\begin{equation}
(S_1 \Psi)(t) :=  \int^{2\pi}_0 G_1(t,s) \Psi (s)  ds
\end{equation}
\begin{equation}
(S_2 \Psi)(t) :=  \int^{2\pi}_0 ( G_2(t,s) + G_3 (t)) \Psi(s) ds.
\end{equation}
\begin{equation}
(K_2 \Psi)(t) :=  \int^{2\pi}_0 ( G_2(t,s) + G_3 (s)) \Psi(s) ds.
\end{equation}
\begin{equation}
(K \Psi)(t) :=  \int^{2\pi}_0 ( G_1(t,s) + G_2(t,s) + G_3 (s)) \Psi(s) ds.
\end{equation}
\begin{equation}
S_0 = S_1 + S_2, \quad K = S_1 + K_2.
\end{equation}
\begin{lemma}
It holds that
\begin{equation*}
\frac{1}{2\pi} \int^{2\pi}_0 e^{ins} \ln (4 \sin^2 \frac{s}{2}) ds =
\left\{
\begin{array}{rcl}
-\frac{1}{\vert n \vert}, & &  {n \in \mathrm{Z}, n \neq 0},  \\
0,  & &  {n = 0}.
\end{array}
\right.
\end{equation*}
This gives that the functions
\begin{equation*}
\hat \psi_n (t) := e^{int}, \quad t \in [0,2\pi], \ n \in \mathrm{Z},
\end{equation*}
are eigenfunctions of $ S_1 $:
\begin{equation*}
S_1 \hat \psi_n = \frac{1}{2 \vert n \vert} \hat \psi_n \quad \textrm{for} \ n \neq 0 \ \textrm{and}
\end{equation*}
\begin{equation*}
S_1 \hat \psi_0 = \frac{1}{2} \hat \psi_0.
\end{equation*}
\end{lemma}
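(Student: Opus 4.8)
The plan is to establish the integral formula first, and then obtain the two eigenfunction relations as a direct consequence through a change of variables that exploits the translation invariance of the kernel $\ln(4\sin^2\frac{t-s}{2})$.

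First I would compute the Fourier coefficients
\[
c_n := \frac{1}{2\pi}\int_0^{2\pi} e^{ins}\,\ln\!\left(4\sin^2\tfrac{s}{2}\right)ds .
\]
The starting point is the elementary identity $4\sin^2\frac{s}{2}=|1-e^{is}|^2$, so that $\ln(4\sin^2\frac{s}{2})=2\,\mathrm{Re}\,\ln(1-e^{is})$. For $0\le r<1$ the power series $-\ln(1-re^{is})=\sum_{m=1}^\infty \frac{r^m}{m}e^{ims}$ converges absolutely and uniformly in $s$, whence
\[
\ln|1-re^{is}|^2 = -2\sum_{m=1}^\infty \frac{r^m}{m}\cos(ms).
\]
Multiplying by $e^{ins}$, integrating term by term (legitimate by uniform convergence), and using the orthogonality of the exponentials, I obtain $\frac{1}{2\pi}\int_0^{2\pi}e^{ins}\ln|1-re^{is}|^2\,ds=-r^{|n|}/|n|$ for $n\neq 0$, and $0$ for $n=0$.

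The main obstacle, and the only genuinely analytic point, is the passage to the limit $r\to 1^-$. I would justify
\[
\lim_{r\to 1^-}\int_0^{2\pi}e^{ins}\ln|1-re^{is}|^2\,ds=\int_0^{2\pi}e^{ins}\ln\!\left(4\sin^2\tfrac{s}{2}\right)ds
\]
by dominated convergence, using that the family $\{\ln|1-re^{is}|^2\}_{0\le r\le 1}$ admits an integrable majorant near the singularity $s=0$ (the singularity is only logarithmic and uniformly controlled in $r$). This yields $c_n=-1/|n|$ for $n\neq 0$ and $c_0=0$, which is precisely the claimed formula. Equivalently, one may read off $c_0=0$ at once from the mean value property of the harmonic function $\ln|1-rz|$, avoiding the limit in the $n=0$ case.

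Finally I would deduce the eigenfunction relations. Applying $S_1$ to $\hat\psi_n(t)=e^{int}$ and recalling $G_1(t,s)=-\frac{1}{4\pi}\bigl(\ln(4\sin^2\frac{t-s}{2})-1\bigr)$ gives
\[
(S_1\hat\psi_n)(t)=-\frac{1}{4\pi}\int_0^{2\pi}\ln\!\left(4\sin^2\tfrac{t-s}{2}\right)e^{ins}\,ds+\frac{1}{4\pi}\int_0^{2\pi}e^{ins}\,ds .
\]
In the first integral I substitute $u=t-s$ and use the $2\pi$-periodicity of the integrand to restore the interval $[0,2\pi]$; this factors out $e^{int}$ and reduces the integral to $2\pi c_{-n}$, where $c_{-n}=c_n$ since the coefficient depends only on $|n|$. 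The second integral equals $2\pi\,\delta_{n,0}$. For $n\neq 0$ this gives $(S_1\hat\psi_n)(t)=-\frac{1}{4\pi}\cdot 2\pi\cdot(-1/|n|)\,e^{int}=\frac{1}{2|n|}\hat\psi_n(t)$, and for $n=0$ it gives $(S_1\hat\psi_0)(t)=\frac{1}{4\pi}\cdot 2\pi=\frac12\hat\psi_0(t)$, which are exactly the two asserted identities.
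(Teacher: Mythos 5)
Your argument is correct and complete. Note, however, that the paper does not actually prove this lemma: it simply cites [13, Theorem 3.17], so there is no in-paper argument to compare against step by step. What you have written is essentially the standard proof of that cited result: the Abel--Poisson device $\ln(4\sin^2\frac{s}{2})=\lim_{r\to1^-}\ln|1-re^{is}|^2$ together with the uniformly convergent expansion $\ln|1-re^{is}|^2=-2\sum_{m\ge1}\frac{r^m}{m}\cos(ms)$ for $r<1$, followed by termwise integration and a limit passage. All the individual steps check out: the orthogonality computation gives $-r^{|n|}/|n|$ for $n\neq0$ and $0$ for $n=0$; the identity $|1-re^{is}|^2=(1-r)^2+4r\sin^2\frac{s}{2}$ yields $4r\sin^2\frac{s}{2}\le|1-re^{is}|^2\le4$, so for $r\in[\frac12,1]$ the majorant $\ln 4+|{\ln(2\sin^2\frac{s}{2})}|$ is integrable and dominated convergence applies (you should make explicit that $r$ is kept away from $0$, but this is cosmetic since only $r\to1^-$ matters); and the translation substitution $u=t-s$ with periodicity, together with $c_{-n}=c_n$, correctly produces the eigenvalue $\frac{1}{2|n|}$ for $n\neq0$ and $\frac12$ for $n=0$, the latter coming entirely from the constant $+\frac{1}{4\pi}$ in $G_1$. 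Your proof therefore supplies the content the paper outsources to the reference, at the cost of one genuinely analytic step (the justification of the limit $r\to1^-$), which you have handled adequately.
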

\begin{proof}
See [13, Theorem 3.17]
\end{proof}
\begin{lemma}
Let $ \Omega \subseteq \mathrm{R}^2 $ be a simply connected bounded domain with $ \partial \Omega $ be its boundary analytic. Then
 \newline (a) $ S_0 $ is compact in $ L^2(0,2\pi) $ and $ K = S^*_0 $ when we see $ K, S_0 $ both as operator on $ L^2(0,2\pi) $.
 \newline (b) The operator $ S_1 $ is bounded injective from $ H^{s-1} (0,2\pi) $ onto $ H^s(0,2\pi) $ with bounded inverses for every $ s \in \mathrm{R} $, the same assertion also holds for $ S_0, K $.
 \newline (c) The operator $ S_1 $ is coercive from $ H^{-\frac{1}{2}} (0,2\pi) $ into $ H^{\frac{1}{2}} (0,2\pi) $.
 \newline (d) The operator $ S_2, K_2 $ is compact from $ H^{s-1} (0,2\pi) $ into $ H^s(0,2\pi) $ for every $ s \in \mathrm{R} $.
\end{lemma}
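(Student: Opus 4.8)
The plan is to reduce everything to the Fourier diagonalization of the principal operator $S_1$ and to treat the remaining pieces as smoothing perturbations. By Lemma 2.5, $S_1$ is a Fourier multiplier: $S_1\hat\psi_n=\lambda_n\hat\psi_n$ on $\hat\psi_n(t)=e^{int}$, with $\lambda_n=\frac1{2|n|}$ for $n\neq0$ and $\lambda_0=\frac12$. From $|n|\le(1+n^2)^{1/2}\le\sqrt2\,|n|$ for $n\neq0$ (and a direct check at $n=0$) one obtains the two-sided bound $\frac12(1+n^2)^{-1/2}\le\lambda_n\le(1+n^2)^{-1/2}$. Since $\|x\|_{H^s}^2=\sum_n(1+n^2)^s|a_n|^2$, these bounds give $\sum(1+n^2)^s\lambda_n^2|a_n|^2\asymp\sum(1+n^2)^{s-1}|a_n|^2$, i.e. multiplication by $\lambda_n$ maps $H^{s-1}$ boundedly onto $H^s$ with bounded inverse; this is part (b) for $S_1$, injectivity being automatic because every $\lambda_n>0$ and the inverse being furnished by the multiplier $\lambda_n^{-1}$. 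For part (c) I would evaluate $(S_1\psi,\psi)_{L^2}=2\pi\sum_n\lambda_n|a_n|^2$ and insert the lower symbol bound to get $(S_1\psi,\psi)_{L^2}\ge c\,\|\psi\|_{H^{-1/2}}^2$ with $c>0$, which is coercivity of $S_1:H^{-1/2}\to H^{1/2}$ in the duality pairing extending the $L^2$ inner product.

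For part (d) I would exploit smoothness of the perturbing kernels. The kernels $G_2(t,s)+G_3(t)$ and $G_2(t,s)+G_3(s)$ are $2\pi$-periodic in both variables and of class $C^\infty$, because $G_2$ continues analytically onto $[0,2\pi]^2$ and $G_3\in C^\infty[0,2\pi]$ (both recorded just above Lemma 2.5). Hence, for any target order $r$, Lemma 2.3 makes $S_2$ and $K_2$ bounded from $H^{s-1}$ into $H^r$; taking $r=|s|+2$ (so that $-r\le s-1\le r$) and composing with the compact inclusion $H^r\hookrightarrow H^s$ yields compactness of $S_2,K_2:H^{s-1}\to H^s$ for every $s\in\mathrm{R}$.

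Part (a) then splits into a compactness statement and an adjoint identity. On $L^2$, $S_1$ is bounded from $L^2=H^0$ into $H^1$ while $H^1\hookrightarrow L^2$ is compact (equivalently $\lambda_n\to0$), so $S_1$ is compact on $L^2$; the same reasoning, now via Lemma 2.3 applied to the smooth kernel, makes $S_2$ compact on $L^2$, whence $S_0=S_1+S_2$ is compact on $L^2$. For $K=S_0^*$ I would transpose kernels: the $L^2$-adjoint of $S_0$ carries kernel $G(s,t)$, and since $G_1$ and $G_2$ are symmetric in their arguments while the $t$-only term $G_3(t)$ passes to $G_3(s)$, the adjoint kernel is precisely $G_1+G_2+G_3(s)$, which is the kernel defining $K$.

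Finally, for part (b) applied to $S_0$ and $K$ I would factor $S_0=S_1(I+S_1^{-1}S_2)$ on $H^{s-1}$ (and likewise $K=S_1(I+S_1^{-1}K_2)$). Since $S_1^{-1}:H^s\to H^{s-1}$ is bounded and $S_2:H^{s-1}\to H^s$ is compact, $S_1^{-1}S_2$ is compact on $H^{s-1}$, so $I+S_1^{-1}S_2$ is Fredholm of index zero; thus $S_0$ is an isomorphism precisely when it is injective. I expect the main obstacle to be injectivity on the whole Sobolev scale, not merely on $C(\partial\Omega)$. I plan to bridge this by a regularity bootstrap: if $S_0\psi=0$ then $S_1\psi=-S_2\psi$, and since $S_2$ smooths to arbitrary order while $S_1^{-1}$ loses only one, iterating $\psi=-S_1^{-1}S_2\psi$ forces $\psi\in C^\infty$, hence $\psi$ corresponds to a continuous density, so the injectivity of $S_0$ on $C(\partial\Omega)$ quoted in the introduction (from [15, Theorem 7.41]) gives $\psi=0$. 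Injectivity of $K$ then follows because the injective index-zero Fredholm operator $S_0$ is onto, whence its adjoint $K=S_0^*$ is injective on $L^2$, and the same bootstrap promotes this to every $H^{s-1}$.
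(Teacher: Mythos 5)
Your proposal is correct and follows essentially the same route as the paper: the same kernel decomposition, Lemma 2.3 plus the compact Sobolev embedding for (d), the factorization $S_0=S_1(I+S_1^{-1}S_2)$ with Riesz--Fredholm theory reduced to injectivity via a regularity bootstrap and the known injectivity on $C(\partial\Omega)$, and duality ($\mathcal{N}(K)=\mathcal{R}(S_0)^{\perp}$) for $K$. The only difference is that you prove (a), (c) and the $S_1$ part of (b) directly from the Fourier diagonalization of Lemma 2.5, where the paper simply cites [13, Theorems A.33 and 3.18]; these are the standard arguments behind those citations.
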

\begin{proof}
See [13, Theorem A.33, Theorem 3.18] for (a),(c) and the former part of (b).
\newline \indent Following the main idea in [13, theorem 3.18], we prove (d) and the latter part of (b). Since the $ G_2(t,s) + G_3(t,s) $ has a $ C^\infty $ continuation on $ [0,2\pi] \times [0,2\pi] $, by Lemma 2.3, $ S_2 $ defines a bounded operator from $ H^{s-1}(0,2\pi) $ to $ H^p (0,2\pi) $ with $ s < p $. Composing with compact embedding $ H^p (0,2\pi) \subset \subset H^s(0,2\pi) $, (d) follows.
\newline \indent For the latter part of (b), we see $ S_0 = S_1 ( I + S^{-1}_1 S_2) $ and $ K = S_1 ( I + S^{-1}_1 K_2) $. Notice that, for $ s \in \mathrm{R} $, by (d) and former part of (b), $ S^{-1}_1 S_2 $ and $ S^{-1}_1 K_2 $ are all compact operators on $ H^{s-1}(0,2\pi) $. Now by Riesz theorem (See [13, Theorem A.34 (b)]), if we prove that $ ( I + S^{-1}_1 S_2) $ and $ ( I + S^{-1}_1 K_2) $ are all injective on $ H^{s-1} (0,2\pi) $, that is, $ S_0 $ and $ K $ are all injective from $ H^{s-1} (0,2\pi) $ to $ H^s(0,2\pi) $, then we prove that $ ( I + S^{-1}_1 S_2) $ and $ ( I + S^{-1}_1 K_2) $ are all surjective on $ H^{s-1} (0,2\pi) $ with bounded inverses, that is, $ S_0 $ and $ K $ are all surjective from $ H^{s-1} (0,2\pi) $ to $ H^s(0,2\pi) $ with bounded inverses.
\newline \indent Now it is sufficient to prove the injectivity of $ S_0, K $ from $ H^{s-1} (0,2\pi) $ to $ H^s (0,2\pi) $ with $ s \in \mathrm{R} $. Let $ \Psi \in H^{s-1}(0,2\pi) $ with $ S_0 \Psi = 0 $. From $ S_1 \Psi = - S_2 \Psi $ and the mapping properties (Lemma 2.3) of $ S_2 $, we know $ S_1 \Psi \in H^p (0,2\pi), \forall p \in \mathrm{R} $ and thus, $ \Psi \in H^1 (0,2\pi) $. This implies that $ \Psi $ is continuous and the transformed function $ \varphi (\gamma(t)) = \frac{\Psi(t)}{\vert \gamma'(t) \vert} $ satifies (1.4) for $ g = 0 $. The injectivity of $ S_0 $ on $ C(\partial \Omega) $ gives $ \varphi = 0 $.
\newline \indent Notice that when $ K,S_0 $ are defined on $ L^2(0,2\pi) $, $ \mathcal{N}(K) = \mathcal{N}(S^*_0) = \mathcal{R}(S_0)^\perp = 0 $.
Let $ \Psi \in H^{s-1}(0,2\pi) $ with $ K \Psi = 0 $. From $ S_1 \Psi = - K_2 \Psi $ and the mapping properties (Lemma 2.3) of $ K_2 $, we know $ S_1 \Psi \in H^2 (0,2\pi) $ and thus, $ \Psi \in H^1 (0,2\pi) \subseteq L^2 (0,2\pi) $. Thus, $ \Psi = 0 $.
\end{proof}

\section{Convergence analysis for Petrov-Galerkin methods}
Let $ \Psi^\dagger \in H^r(0,2\pi) $ be the unique solution of (1.7); that is,
\begin{equation*}
(S_0 \Psi^\dagger)(t): = \int^{2\pi}_0 G(t,s) \Psi^\dagger(s) ds = g(t) := f(\gamma(t)),
\end{equation*}
for $ t \in [0,2\pi] $ and some $ g \in H^{r+1} (0,2\pi) $ for $ r \geq 0 $. Let $ g^\delta \in L^2(0,2\pi) $ with $ \Vert g^\delta - g \Vert_{L^2} \leq \delta $ and $ X_n $ defined by
\begin{equation}
X_n = span \{e^{ikt} \}^{n}_{k=-n},
\end{equation}
\subsection{Least squares method}
Let $ \Psi^\delta_n $ be the least squares solution of (1.7); that is,
\begin{equation*}
(S_0 \Psi^\delta_n ,  S_0 \psi_n) = (g^\delta, S_0 \psi_n ) \quad \textrm{for all} \ \psi_n \in X_n.
\end{equation*}
Then there exists $ c> 0 $ with
\begin{equation*}
\Vert \Psi^\dagger - \Psi^\delta_n \Vert_{L^2} \leq C ( n \delta + \frac{1}{n^r} \Vert x \Vert_{H^r}).
\end{equation*}

\begin{lemma}
(Stability estimate):
There exists a $ c>0 $, independent of $ n $, such that
\begin{equation}
\Vert \Psi_n \Vert_{L^2} \leq C n \Vert S_0 \Psi_n \Vert_{L^2} \ \textrm{for all}  \ \Psi_n \in X_n.
\end{equation}
The assertion also holds for the adjoint operator $ S^*_0 $, that is,
\begin{equation}
\Vert \Psi_n \Vert_{L^2} \leq C n \Vert S^*_0 \Psi_n \Vert_{L^2} \ \textrm{for all}  \ \Psi_n \in X_n.
\end{equation}
\end{lemma}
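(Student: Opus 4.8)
The plan is to exploit the fact that $S_0$ acts as a boundedly invertible operator of order $-1$, as recorded in Lemma 2.6(b), and to convert the gain of one order of smoothness into the prefactor $n$ by means of the inverse inequality of Lemma 2.2. The two Sobolev indices are essentially forced on us: we need a pair $H^{\sigma-1},H^\sigma$ across which $S_0$ is an isomorphism (any $\sigma\in\mathrm{R}$ works by Lemma 2.6(b)), and we need the right-hand norm in (3.2) to be the $L^2=H^0$ norm; this pins down $\sigma=0$, i.e. we work with the isomorphism $S_0:H^{-1}(0,2\pi)\to L^2(0,2\pi)$.

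First I would apply the inverse inequality (Lemma 2.2) with $r=0$ and $s=-1$ to the trigonometric polynomial $\Psi_n\in X_n$, obtaining
\[
\Vert \Psi_n\Vert_{L^2}=\Vert \Psi_n\Vert_{H^0}\leq c\,n^{0-(-1)}\Vert \Psi_n\Vert_{H^{-1}}=c\,n\,\Vert \Psi_n\Vert_{H^{-1}},
\]
with $c$ independent of $n$. Next, by Lemma 2.6(b) with $s=0$, the operator $S_0:H^{-1}(0,2\pi)\to H^0(0,2\pi)$ is bounded and boundedly invertible, so that $\Vert \Psi_n\Vert_{H^{-1}}=\Vert S_0^{-1}S_0\Psi_n\Vert_{H^{-1}}\leq \Vert S_0^{-1}\Vert\,\Vert S_0\Psi_n\Vert_{L^2}$, where $\Vert S_0^{-1}\Vert$ is the operator norm of $S_0^{-1}:L^2\to H^{-1}$, a fixed finite constant. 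Chaining the two inequalities yields $\Vert \Psi_n\Vert_{L^2}\leq c\,\Vert S_0^{-1}\Vert\,n\,\Vert S_0\Psi_n\Vert_{L^2}$, which is exactly (3.2) with $C:=c\,\Vert S_0^{-1}\Vert$ independent of $n$.

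For the adjoint estimate (3.3) I would run the identical argument with $S_0$ replaced by $S_0^*=K$. The point is that $K$ is again a boundedly invertible operator of order $-1$: Lemma 2.6(a) identifies $K=S_0^*$ on $L^2(0,2\pi)$, and Lemma 2.6(b) asserts that $K:H^{s-1}\to H^s$ is boundedly invertible for every $s$, in particular for $s=0$. Hence the inverse inequality together with $\Vert \Psi_n\Vert_{H^{-1}}\leq \Vert K^{-1}\Vert\,\Vert K\Psi_n\Vert_{L^2}$ gives
\[
\Vert \Psi_n\Vert_{L^2}\leq c\,\Vert K^{-1}\Vert\,n\,\Vert S_0^*\Psi_n\Vert_{L^2},
\]
as claimed.

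The argument is short, and the only real obstacle is the bookkeeping of indices. One must make sure that the order gap fed into the inverse inequality is exactly $1$, so that the prefactor is $n$ rather than some other power, and that this same gap matches a pair of spaces across which Lemma 2.6(b) provides an isomorphism; both requirements are met by the choice $(H^{-1},H^0)$. The independence of $C$ from $n$ then follows because neither the inverse-inequality constant $c$ nor the operator norm $\Vert S_0^{-1}\Vert$ (respectively $\Vert K^{-1}\Vert$) depends on $n$.
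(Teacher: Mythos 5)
Your proof is correct. It takes a route that is close in spirit to the paper's but invokes different tools: the paper first establishes (3.2) for $S_1$ by an explicit Fourier computation, using the eigenvalues $\tfrac{1}{2|n|}$ of $S_1$ from Lemma 2.5 to get $\Vert S_1\Psi_n\Vert_{L^2}^2=\tfrac{\pi}{2}\bigl[|a_0|^2+\sum_{0<|j|\le n}\tfrac{1}{j^2}|a_j|^2\bigr]\ge \tfrac{1}{n^2}\Vert\Psi_n\Vert^2$, and then transfers the bound to $S_0$ (and to $K=S_0^*$) via the factorization $S_0=(S_0S_1^{-1})S_1$, where $S_0S_1^{-1}$ is an automorphism of $L^2$ by Lemma 2.6(b). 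You bypass $S_1$ entirely: you encode the same Fourier-side fact in the inverse inequality $\Vert\Psi_n\Vert_{L^2}\le c\,n\Vert\Psi_n\Vert_{H^{-1}}$ (Lemma 2.2 with $r=0$, $s=-1$) and then use the $H^{-1}\to L^2$ isomorphism property of $S_0$ and $K$ from Lemma 2.6(b) directly. The two arguments rest on the same two ingredients --- a degree-$n$ trigonometric polynomial loses at most a factor $n$ when measured in $H^{-1}$ instead of $L^2$, and $S_0$ is a boundedly invertible operator of order $-1$ --- so neither is more powerful, but yours is slightly more abstract and would apply verbatim to any operator satisfying Lemma 2.6(b), while the paper's version makes the constant for $S_1$ explicit. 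Your index bookkeeping ($\sigma=0$, gap exactly $1$) and the observation that neither the inverse-inequality constant nor $\Vert S_0^{-1}\Vert$ depends on $n$ are both sound.
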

\begin{proof}
Similar to [13, Lemma 3.19], for $ \Psi_n = \sum^{n}_{k=-n} a_k e^{ikt} \in X_n $,
\begin{equation*}
\Vert S_1 \Psi_n \Vert^2_{L^2} = \frac{\pi}{2} [ \vert a_0 \vert^2 + \sum_{\vert j \vert \leq n, j \neq 0} \frac{1}{j^2} \vert a_j \vert^2 ] \geq \frac{1}{n^2} \Vert \Psi_n \Vert^2.
\end{equation*}
which proves estimate (3.2) for $ S_1 $. The estimate for $ S_0 $ follows from the observation that $ S_0 = (S_0 S^{-1}_1) S_1 $ and that $ (S_0 S^{-1}_1) $ is bounded with bounded inverse in $ L^2(0,2\pi) $ by Lemma 2.6 (b). As to the adjoint case, $ S^*_0 \Psi_n  =  K \Psi_n , \ \forall \Psi_n \in X_n $, again using above observation, (3.3) follows.
\end{proof}
\begin{proof}
\begin{equation*}
\min_{z_n \in X_n}\Vert x - z_n \Vert \leq \Vert x - P_n x  \Vert \leq 2 \Vert x \Vert
\end{equation*}
\begin{equation*}
 \sigma_n(S_0) := \max \{ \Vert \psi_n \Vert_{L^2}: \psi_n \in X_n, \ \Vert S_0 \psi_n \Vert_{L^2} = 1  \} \leq C n \quad (\textrm{by} \ (3.2))
\end{equation*}
\begin{equation*}
 \Vert S_0 ( x - P_n x) \Vert_{L^2} \leq \Vert S_0 \Vert_{H^{-1} \to L^2} \Vert x - P_n x \Vert_{H^{-1}}  \leq \frac{C}{n} \Vert x \Vert_{L^2} \quad \textrm{for all} \ x \in L^2(0,2\pi).
\end{equation*}
Then it yields that
\begin{equation*}
\min_{z_n \in X_n} \{ \Vert x - z_n \Vert_{L^2} + \sigma_n \Vert S_0 (x-z_n) \Vert_{L^2} \} \leq    C \Vert x \Vert_{L^2}, \quad \textrm{for all} \ x \in L^2(0,2\pi).
\end{equation*}
Application of [13, Theorem 3.10] yields that
\begin{equation*}
\Vert \Psi^\dagger - \Psi^\delta_n \Vert_{L^2} \leq C (n \delta +  \min\{  \Vert \Psi^\dagger - z_n \Vert_{L^2}: z_n \in X_n \})
\end{equation*}
together with Lemma 2.1, the desired result yields.
\end{proof}

\subsection{Dual least squares method}
Let $ \Psi^\delta_n = S_0 \tilde \psi^\delta_n  $ be with $ \tilde \psi^\delta_n \in X_n $ be the dual least squares solution of (1.7); that is, $ \tilde \psi^\delta_n $ solves
\begin{equation*}
(S_0 \tilde \psi^\delta_n ,  S_0 \psi_n) = (g^\delta, \psi_n ) \quad \textrm{for all} \ \psi_n \in X_n.
\end{equation*}
Then there exists $ c> 0 $ with
\begin{equation*}
\Vert \Psi^\dagger - \Psi^\delta_n \Vert_{L^2} \leq C ( n \delta + \min\{  \Vert \Psi - z_n \Vert_{L^2}: z_n \in S^*_0 (X_n) \}).
\end{equation*}
\begin{proof}
Notice that
\begin{equation*}
 \sigma'_n(S_0) := \max \{ \Vert \psi_n \Vert_{L^2}: \psi_n \in X_n, \ \Vert S^*_0 \psi_n \Vert_{L^2} = 1  \} \leq C n \quad (\textrm{by} \ (3.3))
\end{equation*}
Then, application of [13, Theorem 3.11] yields the desired result.
\end{proof}
\subsection{Bubnov-Galerkin method}
Let $ \Psi^\delta_n  \in X_n $ be the Bubnov-Galerkin solution; that is, the solution of
\begin{equation*}
(S_0 \Psi^\delta_n ,  S_0 \psi_n) = (g^\delta, \psi_n ) \quad \textrm{for all} \ \psi_n \in X_n.
\end{equation*}
Then there exists $ c> 0 $ with
\begin{equation*}
\Vert \Psi^\dagger - \Psi^\delta_n \Vert_{L^2} \leq C ( n \delta + \frac{1}{n^r} \Vert x \Vert_{H^r}).
\end{equation*}
\begin{proof}
Following [13, Theorem 3.20], set $ V = H^{\frac{1}{2}}(0,2\pi) $ and $ V^* = H^{-\frac{1}{2}} (0,2\pi) $, with Lemma 2.6 (c) and (d) of $ s = \frac{1}{2} $, we know $ S_0: H^{- \frac{1}{2}} (0,2\pi) \to H^{\frac{1}{2}} (0,2\pi) $ satisfies G\"{a}rding inequality with $ - S_2 $ defined in (2.7). With application of Lemma 2.2 of $ r = 0, s = - \frac{1}{2} $, we have
\begin{equation*}
\rho_n := \max \{ \Vert \psi_n \Vert_{L^2}: \psi_n \in X_n, \Vert \psi_n \Vert_{H^{-\frac{1}{2}}} = 1 \} \leq c \sqrt{n}.
\end{equation*}
By Lemma 2.1, we have
\begin{equation*}
\Vert u - P_n u \Vert_{H^{-\frac{1}{2}}} \leq c \sqrt{n} \Vert u \Vert_{L^2} \quad {\textrm{for all}} \ u \in L^2(0,2\pi)
\end{equation*}
Thus, by [13, Theorem 3.14], we have
\begin{equation*}
 \Vert \Psi^\dagger - \Psi^\delta_n \Vert_{L^2} \leq  c (n \delta + \Vert (I- P_n ) \Psi^\dagger \Vert_{L^2}).
\end{equation*}
Together with Lemma 2.1, the desired result yields.
\end{proof}

\section{Analysis for Galerkin-Collocation method}
Define collocation points by
\begin{equation*}
t_k := k \frac{\pi}{n}, \quad k = 0,1,\cdots, 2n-1.
\end{equation*}
The collocation equation take the form
\begin{equation}
\int^{2\pi}_0 G(t_k,s) \Psi_n(s) ds = g(t_k),  \quad k = 0,1,\cdots, 2n-1.
\end{equation}
 with $ \Psi_n \in X_n := \textrm{span} \{ e^{i j t }\}^{n-1}_{j = -n} $ and
 \begin{equation*}
G(t_k,s) :=
\end{equation*}
 \begin{equation*}
 - \frac{1}{2\pi} \ln \vert \gamma(t_k) - \gamma (s) \vert + \frac{1}{2\pi} \frac{1}{ \vert \partial \Omega \vert} \int^{2\pi}_0 \ln \vert \gamma(t_k)- \gamma(\sigma) \vert \vert \gamma'(\sigma) \vert d \sigma + \frac{1}{\vert \partial \Omega \vert}.
\end{equation*}
Using the decomposition technique that $ S_0 = S_1 + S_2 $, completely similar to [13, Theorem 3.27], we can obtain that
\begin{theorem}
The collocation method is convergence for (1.8) with analytic boundary: that is, the solution $ \Psi_n \in X_n $ converges to the solution $ \Psi^\dagger \in L^2(0,2\pi) $ of (1.8) in $ L^2(0,2\pi) $.
\newline Let the RHS of (4.1) be replaced by $ \beta^\delta \in \mathrm{C}^{2n} $ with
\begin{equation*}
\sum^{2n-1}_{k=0} \vert \beta^\delta_k - g(t_k) \vert^2 \leq \delta^2 .
\end{equation*}
Let $ \alpha^\delta \in \mathrm{C}^{2n} $ be the solution of $ A\alpha^\delta = \beta^\delta $, where $ A_{kj} = S_0(\hat x_j) (t_k) $. Then the following error estimate holds:
\begin{equation*}
\Vert \Psi^\delta_n - \Psi^\dagger \Vert_{L^2} \leq c[ \sqrt{n} \delta + \min\{ \Vert \Psi^\dagger - \psi_n \Vert_{L^2} : \psi_n \in X_n \} ]
\end{equation*}
If $ \Psi^\dagger \in H^r (0,2\pi) $ for some $ r > 0 $, then
\begin{equation*}
\Vert \Psi^\delta_n - \Psi^\dagger \Vert_{L^2} \leq c [\sqrt{n} \delta + \frac{1}{n^r}\Vert \Psi^\dagger \Vert_{H^r} ].
\end{equation*}

\end{theorem}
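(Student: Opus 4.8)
The plan is to recast the collocation scheme (4.1) as a projection method and then transplant the invertibility, stability and approximation estimates already collected for $S_1,S_2,S_0$ in Lemma 2.5 and Lemma 2.6. Let $Q_n$ denote trigonometric interpolation at the $2n$ nodes $t_k=k\pi/n$ onto $X_n=\mathrm{span}\{e^{ijt}\}_{j=-n}^{n-1}$. Since $X_n$ has dimension $2n$ and the nodes are equispaced, discrete orthogonality makes $Q_n$ well defined and equal to the identity on $X_n$, and the collocation identities $\int_0^{2\pi}G(t_k,s)\Psi_n(s)\,ds=g(t_k)$ for all $k$ are equivalent to the operator equation
\[
Q_n S_0\Psi_n = Q_n g,\qquad \Psi_n\in X_n.
\]
First I would record the discrete-to-continuous norm conversion $\|Q_n\beta\|_{L^2}^2=\frac{\pi}{n}\sum_{k=0}^{2n-1}|\beta_k|^2$, which is the source of the $\sqrt n$ in the final bound.

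Next I would exploit the splitting $S_0=S_1+S_2$ from (2.10). By Lemma 2.5 the exponentials $e^{ijt}$ diagonalize $S_1$, so $S_1$ maps $X_n$ bijectively onto itself; using $Q_n|_{X_n}=\mathrm{id}$ the projected equation becomes
\[
(I+S_1^{-1}Q_nS_2)\Psi_n = S_1^{-1}Q_n g.
\]
The crucial step is to show that $S_1^{-1}Q_nS_2\to S_1^{-1}S_2$ in the $L^2$-operator norm: the smoothing property of $S_2$ (Lemma 2.3 together with Lemma 2.6(d)) sends the unit ball of $L^2$ into a bounded subset of some $H^p$ with $p$ large, on which the interpolation error $\|(I-Q_n)w\|_{H^1}$ tends to zero by the standard trigonometric interpolation estimates of the type in Lemma 2.1, while $S_1^{-1}:H^1\to L^2$ is bounded by Lemma 2.6(b). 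Since $I+S_1^{-1}S_2$ is invertible on $L^2$ — this is exactly the invertibility of $S_0$ in Lemma 2.6(b) — a Neumann/perturbation argument then yields that $I+S_1^{-1}Q_nS_2$ is invertible on $X_n$ with inverses bounded uniformly in $n$ for $n$ large. This gives unique solvability of the collocation equation and, by the quasi-optimality of a stable projection method, the convergence $\Psi_n\to\Psi^\dagger$ in $L^2$, which is the first assertion.

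For the quantitative estimates I would combine two ingredients. On one hand, the uniform bound on $(I+S_1^{-1}Q_nS_2)^{-1}$ together with $\|S_1^{-1}h_n\|_{L^2}\le 2n\|h_n\|_{L^2}$ for $h_n\in X_n$ (the eigenvalues of $S_1^{-1}$ on $X_n$ are $2|j|\le 2n$) yields the discrete stability estimate
\[
\|\Psi_n\|_{L^2}\le Cn\,\|Q_nS_0\Psi_n\|_{L^2},\qquad \Psi_n\in X_n,
\]
the collocation analogue of Lemma 3.1. Applying it to $\Psi_n^\delta-\Psi_n$, whose image under $Q_nS_0$ interpolates the residual values $\beta_k^\delta-g(t_k)$, and inserting the norm conversion, whereby $\|Q_n\beta\|_{L^2}^2=\frac{\pi}{n}\sum_k|\beta_k|^2$ bounds the interpolated residual by $\sqrt{\pi/n}\,\delta$, produces the data term $Cn\cdot\sqrt{\pi/n}\,\delta=c\sqrt n\,\delta$. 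On the other hand, stability and $Q_n|_{X_n}=\mathrm{id}$ give the consistency bound $\|\Psi^\dagger-\Psi_n\|_{L^2}\le c\min_{\psi_n\in X_n}\|\Psi^\dagger-\psi_n\|_{L^2}$. Adding the two contributions gives the first error estimate, and invoking Lemma 2.1 with $\Psi^\dagger\in H^r(0,2\pi)$ to bound the best-approximation term by $n^{-r}\|\Psi^\dagger\|_{H^r}$ yields the second.

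The main obstacle is the stability step: establishing the operator-norm convergence $S_1^{-1}Q_nS_2\to S_1^{-1}S_2$ and the attendant uniform invertibility of $I+S_1^{-1}Q_nS_2$ on $X_n$. This is where one must control the trigonometric interpolation operator $Q_n$ in Sobolev norms, not merely in $L^2$, and track the precise powers of $n$, so that the amplification $\|S_1^{-1}\,\cdot\,\|$ on $X_n$ and the factor $1/\sqrt n$ in the discrete norm combine to the claimed $\sqrt n$ rather than $n$. Everything else is the standard projection-method bookkeeping, which applies verbatim once the analytic-boundary kernel has been shown (Lemma 2.6) to split exactly like the classical Symm operator treated in [13, Theorem 3.27].
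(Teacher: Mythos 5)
Your proposal follows essentially the same route as the paper, which gives no details of its own and simply invokes the decomposition $S_0=S_1+S_2$ together with [13, Theorem 3.27]; your reconstruction (collocation as $Q_nS_0\Psi_n=Q_ng$, diagonalization of $S_1$ via Lemma 2.5, uniform invertibility of $I+S_1^{-1}Q_nS_2$ by compact perturbation, the discrete norm identity producing the $\sqrt{n}$, and Lemma 2.1 for the rate) is exactly that argument. The only step you state too quickly is the quasi-optimality bound $\Vert\Psi^\dagger-\Psi_n\Vert_{L^2}\le c\min_{\psi_n\in X_n}\Vert\Psi^\dagger-\psi_n\Vert_{L^2}$, which does not follow from stability and $Q_n|_{X_n}=\mathrm{id}$ alone but needs the aliasing estimate $\Vert Q_nS_0(I-P_n)\Psi^\dagger\Vert_{L^2}\le (c/n)\Vert(I-P_n)\Psi^\dagger\Vert_{L^2}$ (splitting into the $S_1$ part, handled by Cauchy--Schwarz over the aliased frequencies, and the smoothing $S_2$ part), precisely the Sobolev-norm control of $Q_n$ you flag as the remaining obstacle.
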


\section{Numerical experiments}
All experiments are performed in Intel(R) Core(TM) i7-7500U CPU @2.70GHZ 2.90 GHZ Matlab R 2017a.  Here we illustrate the computation procedure in details (refer to [13, Section 3.5] or [14])¡£
\newline \indent The previous paper is mainly on the numerical analysis of PG and GC methods on Fourier basis. Since the PG and GC methods on Fourier basis is equivalent to these on trigonometric interpolation basis (See [14,15]), we implement PG and GC methods on trigonometric interpolation basis.
\newline \indent We first introduce the trigonometric interpolation basis $ \{ L_j (t) \}^{2n-1}_{j=0} $,
\begin{equation*}
L_j(t) = \frac{1}{2n} (1 + 2 \sum^{n-1}_{k=1} \cos k(t - t_j) + \cos n(t-t_j)), \quad j = 0,1,\cdots,2n-1.
\end{equation*}
Notice that
\begin{equation*}
L_j(t_k) = 1, \quad \textrm{if} \ k = j,
\end{equation*}
\begin{equation*}
 L_j(t_k) = 0, \quad \textrm{if} \  k \neq j.
\end{equation*}
 and corresponding trigonometric interpolation operator $ \Pi_n \Psi := \sum^{2n-1}_{j=0} \Psi(t_j) L_{j}(t)  $, notice that $ \Pi_n \Psi \in X_n $, where
\begin{equation*}
X_n = \{ \sum^n_{j=0} a_j \cos(jt) + \sum^{n-1}_{j = 1} b_j \sin(jt): \quad a_j,b_j \in \mathrm{R} \}
\end{equation*}
is $ 2n $ dimensional subspace.
\newline \indent Without introduction of extra techniques of numerical approximation, one can not obtain a proper implementation of PG methods for the logarithmic singularity in modified Symm's integral operator.
In the following, we use the composite trapezial quadature formula and trigonometric interpolation to eliminate singularities in $ S_0 \Psi $. Notice that, given exact solution $ \Psi^\dagger $, if we can obtain a approximation to $ S_0 \Psi^\dagger $ with high precision and no singularity, then we can similarly implement $ S_0 L_j, \ j = 0,\cdots,2n-1 $. The latter is the key point to form corresponding matrix system.
\newline \indent Set
\begin{equation}
(S_0 \Psi)(t) := \int^{2\pi}_0 G(t,s) \Psi(s) ds = g(t), \quad t \in [0,2\pi)
\end{equation}
where
 \begin{equation*}
G(t,s) :=
 - \frac{1}{\pi} \ln \vert \gamma(t) - \gamma (s) \vert + \frac{1}{\pi} \frac{1}{ \vert \partial \Omega \vert} \int^{2\pi}_0 \ln \vert \gamma(t)- \gamma(\sigma) \vert \vert \gamma'(\sigma) \vert d \sigma + \frac{2}{\vert \partial \Omega \vert}.
\end{equation*}

 \indent Do decomposition, rewrite $ S_0 \Psi = g $ as
\begin{equation*}
(S_0 \Psi) (t) = S_K \Psi  +    2G_3(t) ( \int^{2\pi}_0 \Psi (s) ds );
\end{equation*}
\begin{equation*}
S_K(\Psi) = - \frac{1}{\pi} \int^{2\pi}_0 \ln \vert \gamma(t) -\gamma(s) \vert \Psi(s) ds, \quad t \in [0,2\pi],
\end{equation*}
\begin{equation*}
2G_3(t) = \frac{1}{\pi} \frac{1}{ \vert \partial \Omega \vert} \int^{2\pi}_0 \ln \vert \gamma(t)- \gamma(\sigma) \vert \vert \gamma'(\sigma) \vert d \sigma + \frac{2}{\vert \partial \Omega \vert}
\end{equation*}
Notice that if we can implement an good approximation to $ S_K \Psi $, then we can similarly implement $  \frac{1}{\pi} \frac{1}{ \vert \partial \Omega \vert} \int^{2\pi}_0 \ln \vert \gamma(t)- \gamma(\sigma) \vert \vert \gamma'(\sigma) \vert d \sigma $ as $ - \frac{1}{ \vert \partial \Omega \vert} S_K \vert \gamma'(\sigma) \vert $. Thus the difficulties in numerical implementation for $ S_0 \Psi $ are overcome. Now we introduce the procedure to implement $ S_K \Psi $.
\newline \indent Rewrite $ S_K \Psi $ as
\begin{equation*}
S_K \Psi = - \frac{1}{2\pi} \int^{2\pi}_0 \Psi(s) \ln (4 \sin^2 (\frac{t-s}{2})) ds + \int^{2\pi}_0 \Psi (s) k(t,s) ds
\end{equation*}
where $ t \in[0,2\pi] $ and  analytic function
\begin{equation*}
 k (t,s) = -\frac{1}{2\pi} \ln \frac{\vert \gamma(t) - \gamma(s) \vert^2}{4 \sin^2 (\frac{t-s}{2})}, \quad t \neq s ,
\end{equation*}
\begin{equation*}
k(t,t) = -\frac{1}{\pi} \ln \vert \gamma' (t) \vert, \quad 0 \leq t \leq 2\pi.
\end{equation*}
 Using the composite trapezial formula for periodic function, set $ t_j = j \frac{\pi}{n}, \ j = 0,\cdots, 2n-1  $.
\begin{equation*}
 \int^{2\pi}_0 \Psi (s) k  (t,s) ds \approx \frac{\pi}{n} \sum^{2n-1}_{j=0} k (t,t_j) \Psi(t_j) , \quad 0 \leq t \leq 2\pi.
\end{equation*}

\indent As to the approximation to the weakly singular part, using trigonometric interpolation, we have
\begin{equation*}
-\frac{1}{2\pi} \int^{2\pi}_0 \Psi(s) \ln(4 \sin^2 \frac{t-s}{2}) ds \approx - \frac{1}{2\pi} \int^{2\pi}_0 (\Pi_n \Psi) (s) \ln(4 \sin^2 \frac{t-s}{2}) ds
\end{equation*}
\begin{equation*}
 = \sum^{2n-1}_{j = 0 }\Psi(t_j) R_j (t), \quad 0 \leq t \leq 2\pi.
\end{equation*}
where, for $ j = 0,\cdots, 2n-1 $,
\begin{equation*}
R_j(t) = - \frac{1}{2\pi}  \int^{2\pi}_0 L_j(t) \ln(4 \sin^2 \frac{t-s}{2}) ds
= \frac{1}{n} \{  \frac{1}{2n} \cos n(t - t_j) + \sum^{n-1}_{m=1} \frac{1}{m} \cos m(t-t_j)  \}
\end{equation*}
Thus, we obtain an approximation formula for $ S_K \Psi $
\begin{equation*}
(S^{(n)}_K \Psi) (t) := \sum^{2n-1}_{j=0} \Psi(t_j) [ R_j(t) + \frac{\pi}{n} k (t,t_j) ], \quad 0 \leq t \leq 2 \pi.
\end{equation*}
Notice that $ S^{(n)}_K \Psi $ converges to $ S_K \Psi $ uniformly for all $ 2\pi $ periodic continuous function $ \Psi $. Furthermore, if $ \Psi $ is analytic, then the error $ \Vert S^{(n)}_K \Psi - S_K \Psi \Vert_{\infty} $ exponentially decreasing. (See [13, Page 105]).
\newline \indent Let $ n $ be large enough, $ S^{(n)}_K \Psi^\dagger $ is a precise approximation to $  S_K \Psi^\dagger $. However, $ S^{(n)}_K \Psi^\dagger $ still possesses singularity in $ k(t,t_j) $, we add an interpolation step for $ S^{(n)}_K \Psi^\dagger $ to eliminate the singularity, that is,
\begin{equation*}
(S^{(n)}_{TK} \Psi^\dagger) (t) := \sum^{2n-1}_{j=0} \Psi^\dagger (t_j)  R_j(t) + \Pi_n( \sum^{2n-1}_{j=0} \Psi^\dagger (t_j) \frac{\pi}{n} k (t,t_j)) .
\end{equation*}
It can be known that, if $ n $ is sufficiently large, then $ S^{(n)}_{TK} \Psi^\dagger \approx S^{(n)}_K \Psi^\dagger \approx S_K \Psi^\dagger $. For details in error analysis, see [13, Chapter 3] and [15]. In the proceeding content, we will uniformly use
\begin{equation*}
S^{(10)}_{TK} \Psi^\dagger + (  - \frac{1}{ \vert \partial \Omega \vert} S^{(10)}_{TK} \vert \gamma'(\sigma) \vert + \frac{2}{\vert \partial \Omega \vert} ) ( \int^{2\pi}_0 \Psi^\dagger (s) ds )
\end{equation*}
 to replace $ S_0 \Psi^\dagger $.
\newline \indent With above preparation, we can form corresponding matrix system and RHS with no singularity. Before performing the numerical experiments. We introduce the following indexes
\begin{equation*}
g = S^{(10)}_{TK} \Psi^\dagger + (  - \frac{1}{ \vert \partial \Omega \vert} S^{(10)}_{TK} \vert \gamma'(\sigma) \vert + \frac{2}{\vert \partial \Omega \vert} ) ( \int^{2\pi}_0 \Psi^\dagger (s) ds ),
 \end{equation*}
 \begin{equation*}
 g^\delta = g + \delta \frac{\sin 6 t}{\sqrt{\pi}}, \quad \Vert g^\delta - g \Vert_{L^2} =\delta;
\end{equation*}
\begin{equation*}
r := \Vert \Psi^{\delta, \dagger}_n - \Psi^\dagger \Vert_{L^2}
\end{equation*}
where $ \Psi^{\delta, \dagger}_n $ is the least squares, dual least squares, Bubnov-Galerkin solution corresponding to disturbed RHS $ g^\delta $. As to the experiments for exterior Dirichlet problem of Laplace equation, we introduce
\begin{equation*}
u_\infty = u^\dagger (x), \quad x = 10^n d, \ n = 2,4,6,8,10
\end{equation*}
and
\begin{equation*}
u^n_{\infty} = u_n (x), \quad x = 10^n d,  \ n = 2,4,6,8,10
\end{equation*}
\begin{equation*}
Err := \max_{1 \leq i,j \leq 20} \vert u_n (x_{i,j}) - u^\dagger (x_{i,j}) \vert
\end{equation*}
where
\begin{equation*}
d_1 = (1,0)
\end{equation*}
\begin{equation*}
u^\dagger = S_0 \Psi^\dagger, \quad u_n = S_0 \Psi^{\dagger,0}_n
\end{equation*}
\begin{equation*}
x_{i,j} = (0.1 + i, 1.1+j ), \quad 1 \leq i,j \leq 20.
\end{equation*}
We  introduce the $ u_\infty $, $ u^n_\infty $ and $ \vert u_\infty - u^n_\infty  \vert $ to illustrate the behaviour of solution and approximate solution at infinity, that is,  if the solutions and numerical solutions of exterior Dirichlet problem satisfies the boundedness and uniform convergence as stated in theory. Besides, we use $ Err $ to illustrate the whole precision of numerical solutions in different numerical methods.

\begin{example}
 Let the boundary $ \partial \Omega $ of $ \Omega $ be parameterized by $ \gamma(t) = (\cos t , 2 \sin t  ), \ t \in [0,2\pi]  $. We know that $ \Omega $ is bounded and simply connected with non-zero tangent vector in every point of $ \partial \Omega $. The $ \partial \Omega $ is described as

\begin{figure}[htbp]
\centering
 \includegraphics[width=0.8\textwidth]{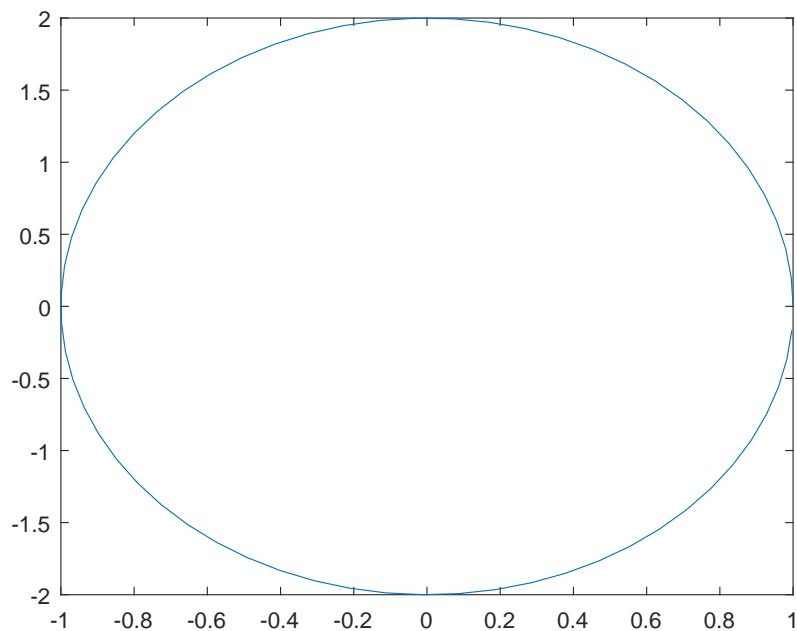}
\caption{$ \Omega $ for Example 5.1}
\end{figure}

Now
\begin{equation*}
k(t,s) =  -\frac{1}{2\pi} \ln \frac{  ( \cos t - \cos s )^2 + 4( \sin t  - \sin s )^2   }{4 \sin^2 (\frac{t-s}{2})},  \quad t \neq s ,
 \end{equation*}
 \begin{equation*}
k (t,t) = -\frac{1}{2\pi} \ln (\sin^2 s  + 4\cos^2 s   ), \quad 0 \leq t \leq 2\pi,
\end{equation*}

 \begin{equation*}
 2G_3(t) = - \frac{1}{ \vert \partial \Omega \vert} (-\frac{1}{2\pi} \int^{2\pi}_0 \ln (4 \sin^2 (\frac{t-\sigma}{2}))
 \sqrt{ \sin^2 \sigma + 4\cos^2 \sigma  } d\sigma
 \end{equation*}
 \begin{equation*}
  -\frac{1}{2\pi} \int^{2\pi}_0 \ln \frac{( \cos t - \cos \sigma )^2 + 4( \sin t  - \sin \sigma )^2   }{4 \sin^2 (\frac{t-\sigma}{2})}
    \sqrt{ \sin^2 \sigma  + 4 \cos^2 \sigma  }  d \sigma)
    \end{equation*}
    \begin{equation*}
  + \frac{2}{\vert \partial \Omega \vert}, \quad t \in [0,2\pi].
\end{equation*}
\end{example}

The following experiments are for modified Symm's integral equation of the first kind: Table 1-6.
\begin{table}
{\begin{tabular}{ccccccccc}
\hline
 & & $ n $ & $ 2 $ & $ 4 $ & $ 6 $ & $ 8 $ & 10 & 12  \\
\cline{2-9}
 & $ \delta = 0 $  & $ r $ & $3.6082 $ & $ 0.3327 $ & $ 0.0161 $  & $ 4.7465 e^{-4} $   & $ 2.0573 e^{-5} $  & $ 2.0573 e^{-5} $   \\
 \cline{2-9}
& $ \delta = 0.001 $  & $ r $ & $3.6082 $ & $ 0.3327 $ & $ 0.0161 $  & $ 0.0060 $   & $ 0.0060 $  & $ 0.0060 $  \\
\cline{2-9}
 & $ \delta = 0.01 $ & $ r $ & $3.6082 $ & $ 0.3327 $ & $ 0.0161 $  & $ 0.0601 $   & $ 0.0601 $  & $ 0.0601 $ \\
 \cline{2-9}
 & $ \delta = 0.1 $  & $ r $ & $3.6082 $ & $ 0.3327 $ & $ 0.0161 $  & $ 0.6008 $   & $ 0.6008 $  & $ 0.6008 $ \\
\hline
\end{tabular}}
\caption{Least squares method for Example 5.1 }
\end{table}

\begin{table}
{\begin{tabular}{ccccccccc}
\hline
 & & $ n $ & $ 2 $ & $ 4 $ & $ 6 $ & $ 8 $ & 10 & 12  \\
\cline{2-9}
 & $ \delta = 0 $  & $ r $ & $3.6082 $ & $ 0.3327 $ & $ 0.0161 $  & $ 4.7465 e^{-4} $   & $ 2.0573 e^{-5} $  & $ 2.0635 e^{-5} $   \\
\hline
\end{tabular}}
\caption{Dual least squares method for Example 5.1. }
\end{table}

\begin{table}
{\begin{tabular}{ccccccccc}
\hline
 & & $ n $ & $ 2 $ & $ 4 $ & $ 6 $ & $ 8 $ & 10 & 12  \\
\cline{2-9}
 & $ \delta = 0 $  & $ r $ & $3.6082 $ & $ 0.3327 $ & $ 0.0161 $  & $ 4.7465 e^{-4} $   & $ 2.0573 e^{-5} $  & $ 2.0573 e^{-5} $   \\
 \cline{2-9}
& $ \delta = 0.001 $  & $ r $ & $3.6082 $ & $ 0.3327 $ & $ 0.0161 $  & $ 0.0060 $   & $ 0.0060 $  & $ 0.0060 $  \\
\cline{2-9}
 & $ \delta = 0.01 $ & $ r $ & $3.6082 $ & $ 0.3327 $ & $ 0.0161 $  & $ 0.0601 $   & $ 0.0601 $  & $ 0.0601 $ \\
 \cline{2-9}
 & $ \delta = 0.1 $  & $ r $ & $3.6082 $ & $ 0.3327 $ & $ 0.0161 $  & $ 0.6008 $   & $ 0.6008 $  & $ 0.6008 $ \\
\hline
\end{tabular}}
\caption{Bubnov-Galerkin method for Example 5.1. }
\end{table}
\begin{table}
{\begin{tabular}{ccccccccc}
\hline
 & & $ n $ & $ 2 $ & $ 4 $ & $ 6 $ & $ 8 $ & 10 & 12  \\
\cline{2-9}
 & $ \delta = 0 $  & $ r $ & $3.7405 $ & $ 0.3828 $ & $ 0.0197 $  & $ 6.0457 e^{-4} $   & $ 2.0573 e^{-5} $  & $ 2.0573 e^{-5} $   \\
 \cline{2-9}
& $ \delta = 0.001 $  & $ r $ & $3.7405 $ & $ 0.3828 $ & $ 0.0197 $  & $ 0.0060 $   & $ 0.0060 $  & $ 0.0060 $  \\
\cline{2-9}
 & $ \delta = 0.01 $ & $ r $ & $3.7405 $ & $ 0.3834 $ & $ 0.0197 $  & $ 0.0601 $   & $ 0.0601 $  & $ 0.0601 $ \\
 \cline{2-9}
 & $ \delta = 0.1 $  & $ r $ & $3.7405 $ & $ 0.4440 $ & $ 0.0197 $  & $ 0.6008 $   & $ 0.6008 $  & $ 0.6008 $ \\
\hline
\end{tabular}}
\caption{Galerkin-collocation method for Example 5.1. }
\end{table}
\begin{table}
{\begin{tabular}{ccccccc}
\hline
 & &  & LS & DLS & BG & GC  \\
\cline{2-7}
  $ \delta = 0 $  & $ n = 20 $ & $ r $ & $ 2.1222 e^{-5} $ & $ 294.4241 $ & $ 2.0973 e^{-5} $  & $ 2.0573 e^{-5} $     \\
\hline
\end{tabular}}
\caption{Additional experiments for PG and GC methods on Example 5.1 with $ n = 20 $ }
\end{table}

\begin{table}
{\begin{tabular}{ccccccccc}
\hline
& & $ n $ & $ 2 $ & $ 4 $ & $ 6 $ & $ 8 $& 10 & 12 \\
\cline{2-9}
 LS &   & $ t $ & 17.2271 & 59.4760 & 135.8122 & 237.7139 & 382.7361 & 551.4632 \\
\cline{2-9}
 DLS &  & $ t $ & 11.4329 &  33.0316 & 72.4827 & 126.1410 & 191.6831 & 323.8654 \\
\cline{2-9}
 BG &  & $ t $ & 12.3776 & 36.2908 & 83.5639 & 139.8100 & 226.9250 & 325.4071 \\
\cline{2-9  }
  GC &  & $ t $ & 4.4151 &  6.3802 & 9.2605 & 12.5734 &  14.6543 & 16.655 \\
\hline
\end{tabular}}
\caption{$ t $ denotes the CPU time $ (s) $}
\end{table}

The following experiments are for corresponding exterior Dirichlet problem of Laplace equation: Table 7, 8.
\begin{table}
{\begin{tabular}{ccccccc}
\hline
& $ n $ &   $ 10^2 $ & $ 10^4 $ & $ 10^6 $ & $ 10^8 $& $ 10^{10} $  \\
\cline{2-7}
 $ d_1 = (1,0) $ & $ u_\infty $  &  6.3301 & 6.3310 & 6.3312 & 6.3314 & 6.3316 \\
\cline{2-7}
 & LS: $ \vert u_\infty - u^n_\infty \vert $  & $ 0.0324 e^{-3} $ &  $ 0.0680 e^{-3} $ & $ 0.1036 e^{-3} $ & $ 0.1391 e^{-3} $ & $ 0.1747 e^{-3} $ \\
\cline{2-7}
   & DLS: $ \vert u_\infty - u^n_\infty \vert $ & $ 0.0324 e^{-3} $ &  $ 0.0680 e^{-3} $ & $ 0.1036 e^{-3} $ & $ 0.1391 e^{-3} $ & $ 0.1747 e^{-3} $ \\
\cline{2-7}
 & BG: $ \vert u_\infty - u^n_\infty \vert $   & $ 0.0324 e^{-3} $ &  $ 0.0680 e^{-3} $ & $ 0.1036 e^{-3} $ & $ 0.1391 e^{-3} $ & $ 0.1747 e^{-3} $ \\
\cline{2-7}
 & GC: $ \vert u_\infty - u^n_\infty \vert $   & $ 0.0324 e^{-3} $ &  $ 0.0680 e^{-3} $ & $ 0.1036 e^{-3} $ & $ 0.1391 e^{-3} $ & $ 0.1747 e^{-3} $ \\
\hline
\end{tabular}}
\caption{$ u_\infty $ show the behavior of solution to Exterior problem of Laplace equation, the others are the error between solution and approximate solution at infinity.}
\end{table}

\begin{table}
{\begin{tabular}{ccccccc}
\hline
 & & $ n $ & $ 2 $ & $ 4 $ & $ 6 $ & $ 8 $   \\
\cline{2-7}
 & LS   & $ Err $ & $0.1208 $ & $ 0.0064 $ & $ 2.2890 e^{-5} $  & $ 2.2881 e^{-5} $      \\
 \cline{2-7}
& DLS   & $ Err $ & $0.1203 $ & $ 0.0064 $ & $ 2.2881 e^{-5} $  & $ 2.2881 e^{-5} $      \\
\cline{2-7}
 & BG & $ Err $ & $0.1203 $ & $ 0.0064 $ & $ 2.2881 e^{-5} $  & $ 2.2881 e^{-5} $      \\
 \cline{2-7}
 & GC & $ Err $ & $0.6812 $ & $ 0.0042 $ & $ 2.3354 e^{-4} $  & $ 2.2881 e^{-5} $      \\
\hline
\end{tabular}}
\caption{$ Err $ denotes the error between solution and approximate solutions near the boundary $ \partial \Omega $}
\end{table}

\begin{example}
 Let the boundary $ \partial \Omega $ of $ \Omega $ be parameterized by $ \gamma(t) = (e^{-1 + \cos t} , 2 e^{ -1 + \sin t}  ), \ t \in [0,2\pi]  $. We know that $ \Omega $ is bounded and simply connected with non-zero tangent vector in every point of $ \partial \Omega $. The $ \partial \Omega $ is described as
\begin{figure}[htbp]
\centering
 \includegraphics[width=0.8\textwidth]{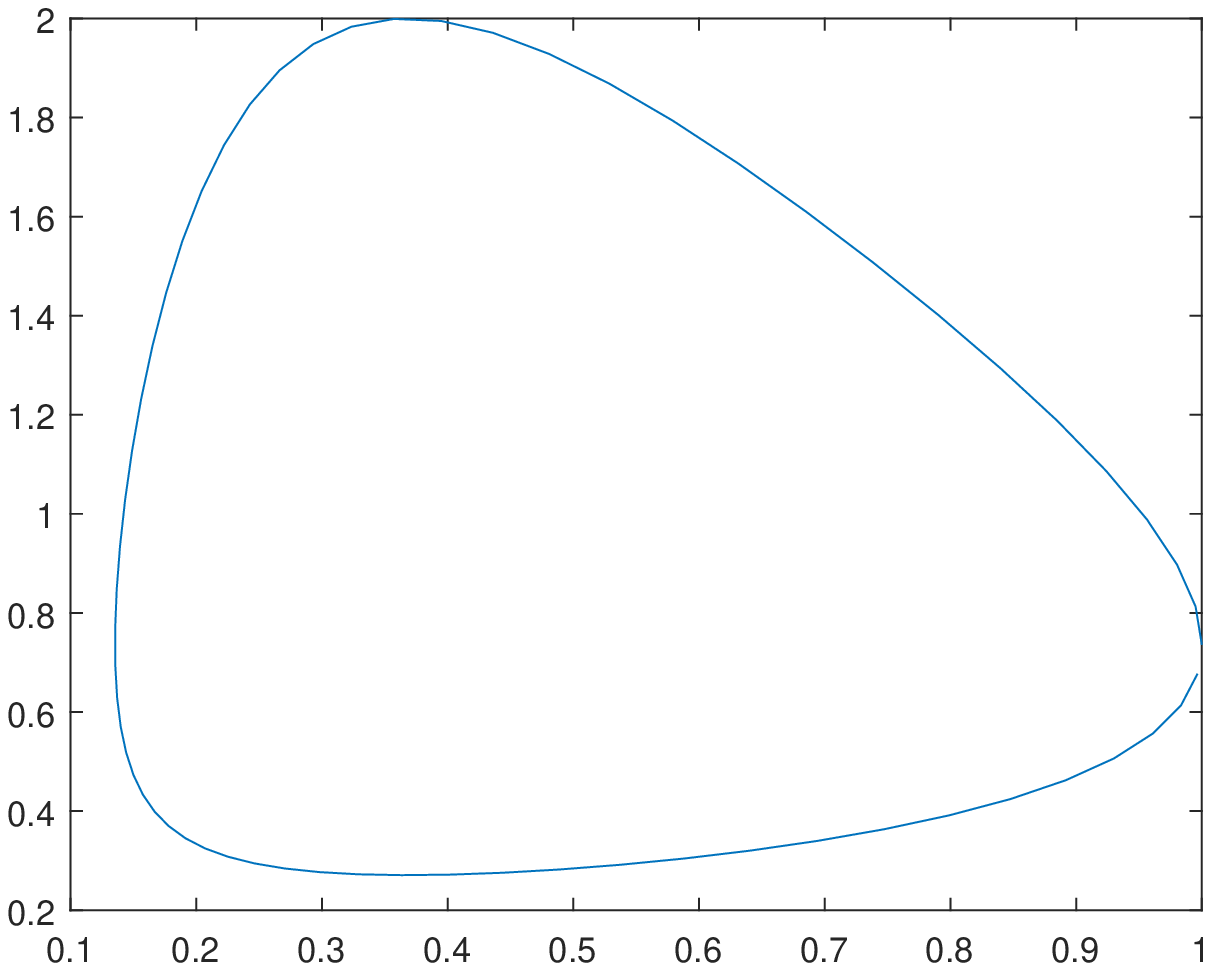}
\caption{$ \Omega $ for Example 5.2}
\end{figure}

Now
\begin{equation*}
k(t,s) =  -\frac{1}{2\pi} \ln \frac{  ( e^{-1 + \cos t} - e^{ -1 + \cos s} )^2 + (2 e^{ -1 + \sin t}   - 2 e^{-1 + \sin s} )^2   }{4 \sin^2 (\frac{t-s}{2})},  \quad t \neq s ,
 \end{equation*}
 \begin{equation*}
k (t,t) = -\frac{1}{2\pi} \ln ( e^{-2 +2 \cos t} \sin^2 t + 4 e^{-2 +  2\sin t } \cos^2 t  ), \quad 0 \leq t \leq 2\pi,
\end{equation*}

 \begin{equation*}
 2G_3(t) = - \frac{1}{ \vert \partial \Omega \vert} (-\frac{1}{2\pi} \int^{2\pi}_0 \ln (4 \sin^2 (\frac{t-\sigma}{2}))
 \end{equation*}
 \begin{equation*}
 \sqrt{  e^{-2 +2 \cos \sigma} \sin^2 \sigma + 4 e^{-2 +  2\sin \sigma } \cos^2 \sigma   } d\sigma
 \end{equation*}
 \begin{equation*}
  -\frac{1}{2\pi} \int^{2\pi}_0 \ln \frac{( \cos t - \cos \sigma )^2 + 4( \sin t  - \sin \sigma )^2   }{4 \sin^2 (\frac{t-\sigma}{2})}
  \end{equation*}
  \begin{equation*}
    \sqrt{  e^{-2 +2 \cos \sigma} \sin^2 \sigma + 4 e^{-2 +  2\sin \sigma } \cos^2 \sigma   }  d \sigma)
    \end{equation*}
    \begin{equation*}
  + \frac{2}{\vert \partial \Omega \vert}, \quad t \in [0,2\pi].
\end{equation*}
\end{example}

The following experiments are for modified Symm's integral equation of the first kind: Table 9-14.

\begin{table}
{\begin{tabular}{ccccccccc}
\hline
 & & $ n $ & $ 2 $ & $ 4 $ & $ 6 $ & $ 8 $ & 10 & 12  \\
\cline{2-9}
 & $ \delta = 0 $  & $ r $ & $ 0.0588 $ & $ 7.1196 e^{-4} $ & $ 4.7592 e^{-6} $  & $ 2.2762 e^{-6} $   & $ 2.2762 e^{-6} $  & $ 2.2762 e^{-6} $   \\
 \cline{2-9}
& $ \delta = 0.001 $  & $ r $ & $ 0.0588 $ & $ 7.1143 e^{-4} $ & $ 1.9955 e^{-4} $  & $ 0.0062 $   & $ 0.0062 $  & $ 0.0062 $  \\
\cline{2-9}
 & $ \delta = 0.01 $ & $ r $ & $ 0.0588 $ & $ 7.6825 e^{-4}  $ & $ 0.0020 $  & $ 0.00617 $   & $ 0.00617 $  & $ 0.00617 $ \\
 \cline{2-9}
 & $ \delta = 0.1 $  & $ r $ & $ 0.0589 $ & $ 0.0032 $ & $ 0.0200 $  & $ 0.6171 $   & $ 0.6173 $  & $ 0.6173 $ \\
\hline
\end{tabular}}
\caption{Least squares method for Example 5.2}
\end{table}
\begin{table}
{\begin{tabular}{ccccccccc}
\hline
 & & $ n $ & $ 2 $ & $ 4 $ & $ 6 $ & $ 8 $ & 10 & 12  \\
\cline{2-9}
 & $ \delta = 0 $  & $ r $ & $ 0.0617 $ & $ 0.0017 $ & $ 1.3082 e^{-5} $  & $ 3.0236 e^{-6} $   & $ 2.7641 e^{-6} $  & $ 1.0268 e^{-5} $   \\
\hline
\end{tabular}}
\caption{Dual least squares method for Example 5.2. }
\end{table}
\begin{table}
{\begin{tabular}{ccccccccc}
\hline
 & & $ n $ & $ 2 $ & $ 4 $ & $ 6 $ & $ 8 $ & 10 & 12  \\
\cline{2-9}
 & $ \delta = 0 $  & $ r $ & $ 0.0585 $ & $ 7.1098 e^{-4} $ & $ 4.7584 e^{-6} $  & $ 2.2762 e^{-6} $   & $ 2.2762 e^{-6} $  & $ 2.2762 e^{-5} $   \\
 \cline{2-9}
& $ \delta = 0.001 $  & $ r $ & $ 0.0585 $ & $ 7.1098 e^{-4} $ & $ 4.7584 e^{-6} $  & $ 0.0062 $   & $ 0.0062 $  & $ 0.0062 $  \\
\cline{2-9}
 & $ \delta = 0.01 $ & $ r $ & $ 0.0585 $ & $ 7.1098 e^{-4}  $ & $ 4.7584 e^{-6} $  & $ 0.00617 $   & $ 0.00617 $  & $ 0.00617 $ \\
 \cline{2-9}
 & $ \delta = 0.1 $  & $ r $ & $ 0.0585 $ & $ 7.1098 e^{-4}  $ & $ 4.7584 e^{-6} $    & $ 0.6172 $   & $ 0.6173 $  & $ 0.6173 $ \\
\hline
\end{tabular}}
\caption{Bubnov-Galerkin method for Example 5.2. }
\end{table}
\begin{table}
{\begin{tabular}{ccccccccc}
\hline
 & & $ n $ & $ 2 $ & $ 4 $ & $ 6 $ & $ 8 $ & 10 & 12  \\
\cline{2-9}
 & $ \delta = 0 $  & $ r $ & $ 0.0610 $ & $ 8.2528 e^{-4} $ & $ 5.5614 e^{-6} $  & $ 2.2762 e^{-6} $   & $ 2.2762 e^{-6} $  & $ 2.2762 e^{-5} $   \\
 \cline{2-9}
& $ \delta = 0.001 $  & $ r $ & $ 0.0610 $ & $ 0.0027 $ & $ 5.5614 e^{-6} $  & $ 0.0062 $   & $ 0.0062 $  & $ 0.0062 $  \\
\cline{2-9}
 & $ \delta = 0.01 $ & $ r $ & $ 0.0610 $ & $ 0.0261  $ & $ 5.5614 e^{-6} $  & $ 0.00617 $   & $ 0.00617 $  & $ 0.00617 $ \\
 \cline{2-9}
 & $ \delta = 0.1 $  & $ r $ & $ 0.0610 $ & $ 0.2611  $ & $ 5.5614 e^{-6} $    & $ 0.6171 $   & $ 0.6173 $  & $ 0.6173 $ \\
\hline
\end{tabular}}
\caption{Galerkin-Collocation method for Example 5.2. }
\end{table}
\begin{table}
{\begin{tabular}{ccccccc}
\hline
 & &  & LS & DLS & BG & GC  \\
\cline{2-7}
  $ \delta = 0 $  & $ n = 20 $ & $ r $ & $ 2.2882 e^{-6} $ & $51.0699 $ & $ 2.2762 e^{-6} $  & $ 2.2762 e^{-6} $     \\
\hline
\end{tabular}}
\caption{Additional experiments for PG and GC methods on Example 5.2 with $ n = 20 $}
\end{table}

\begin{table}
{\begin{tabular}{ccccccccc}
\hline
& & $ n $ & $ 2 $ & $ 4 $ & $ 6 $ & $ 8 $& 10 & 12 \\
\cline{2-9}
 LS &   & $ t $ & 17.6847 & 66.6814 & 133.0920 & 246.4948 & 368.4255 & 519.1206 \\
\cline{2-9}
 DLS &  & $ t $ & 12.8491 &  36.1843 & 80.9006 & 137.3488 & 209.8712 & 305.0765 \\
\cline{2-9}
 BG &  & $ t $ & 11.7570 & 39.6350 & 87.0154 & 168.0406 & 236.5605 & 387.6846 \\
\cline{2-9  }
  GC &  & $ t $ & 4.5987 &  8.0825 & 10.8370 & 12.1855 &  15.2162 & 18.5333 \\
\hline
\end{tabular}}
\caption{$ t $ denotes the CPU time $ (s) $}
\end{table}

The following experiments are for corresponding exterior Dirichlet problem of Laplace equation: Table 15,16.
\begin{table}
{\begin{tabular}{ccccccc}
\hline
& $ n $ &   $ 10^2 $ & $ 10^4 $ & $ 10^6 $ & $ 10^8 $& $ 10^{10} $  \\
\cline{2-7}
 $ d_1 = (1,0) $ & $ u_\infty $  &  2.7535 & 2.7544 & 2.7545 & 2.7545 & 2.7545 \\
\cline{2-7}
 & LS: $ \vert u_\infty - u^n_\infty \vert $  & $ 0.1259 e^{-4} $ &  $ 0.2411 e^{-4} $ & $ 0.3561 e^{-4} $ & $ 0.4711 e^{-4} $ & $ 0.5861
 e^{-4} $ \\
\cline{2-7}
   & DLS: $ \vert u_\infty - u^n_\infty \vert $ & $ 0.1259 e^{-4} $ &  $ 0.2411 e^{-4} $ & $ 0.3561 e^{-4} $ & $ 0.4711 e^{-4} $ & $ 0.5861 e^{-4} $ \\
\cline{2-7}
 & BG: $ \vert u_\infty - u^n_\infty \vert $   & $ 0.1259 e^{-4} $ &  $ 0.2411 e^{-4} $ & $ 0.3561 e^{-4} $ & $ 0.4711 e^{-4} $ & $ 0.5861 e^{-4} $ \\
\cline{2-7}
 & GC: $ \vert u_\infty - u^n_\infty \vert $   & $ 0.1373 e^{-4} $ &  $ 0.2525 e^{-4} $ & $ 0.3675 e^{-4} $ & $ 0.4825 e^{-4} $ & $ 0.5975 e^{-4} $ \\
\hline
\end{tabular}}
\caption{$ u_\infty $ show the behavior of solution to Exterior problem of Laplace equation, the others are the error between solution and approximate solution at infinity.}
\end{table}

\begin{table}
{\begin{tabular}{ccccccc}
\hline
 & & $ n $ & $ 2 $ & $ 4 $ & $ 6 $ & $ 8 $   \\
\cline{2-7}
 & LS   & $ Err $ & $0.0019 $ & $ 1.1127 e^{-5} $ & $ 9.4236 e^{-6} $  & $ 9.4233 e^{-6} $      \\
 \cline{2-7}
& DLS   & $ Err $ & $0.0039 $ & $ 3.3159e^{-5} $ & $ 9.4233 e^{-6} $  & $ 9.4233 e^{-6} $      \\
\cline{2-7}
 & BG & $ Err $ & $0.0013 $ & $ 1.1366 e^{-5} $ & $ 9.4233 e^{-6} $  & $ 9.4233 e^{-6} $      \\
 \cline{2-7}
 & GC & $ Err $ & $0.0093 $ & $ 1.0659 e^{-5} $ & $ 9.4233 e^{-6} $  & $ 9.4233 e^{-6} $      \\
\hline
\end{tabular}}
\caption{$ Err $ denotes the error between solution and approximate solutions near the boundary $ \partial \Omega $. }
\end{table}

With observations on above numerical results, we see that
\begin{itemize}
\item the dual least squares methods is instable and should not be chosen for practical computation, see Table 5, 13.
\item the least squares method, Bubnov-Galerkin, Galerkin-Collocation methods perform well with almost the same precision.
\item The Galerkin-Collocation methods is much more efficient than PG methods.
\item all approximate solutions induced by PG and GC methods to exterior Dirichlet problem satisfy the uniform convergence at infinity.
\end{itemize}
\section{Conclusion}
 In this paper, on the assumption that the boundary $ \partial \Omega $ is analytic, we extend the regular error analysis result from Symm's integral equation to modified Symm's integral equation. Besides, inheriting the quadrature technique to handle the singularity in Symm's integral equation, we establish the numerical procedures of Petrov-Galerkin and Galerkin-Collocation methods  with trigonometric interpolation basis on (1.7). We compare the efficiency, precision between above four numerical methods, show the instability of dual least squares method, and examine the precision of single layer approach with different projection methods to exterior Dirichlet problem of Laplace equation and the behavior of solution at infinity.

\section*{Acknowledgement}
The author thank Nianci Wu for helpful discussion on numerical implementation of modified Symm's integral equation.

\section*{References}
\bibliographystyle{elsarticle-num-names.bst}

\end{document}